\documentclass[11pt]{article}
\usepackage[a4paper,margin=1in]{geometry}
\usepackage{graphicx}
\usepackage{epstopdf}
\usepackage{amssymb}
\usepackage{mathtools}
\usepackage{bm}
\usepackage{amsmath}
\usepackage{dsfont}
\usepackage{extarrows}
\usepackage{caption}
\usepackage{float}
\usepackage{verbatim}
\usepackage{epsfig}
\usepackage{multirow}
\usepackage{subfigure}
\usepackage{booktabs}
\usepackage{array}
\usepackage{threeparttable}
\usepackage{enumitem}
\usepackage{algorithm}
\usepackage{algorithmic}
\usepackage{hyperref}
\usepackage{xcolor}
\usepackage{colortbl}
\usepackage{amsthm}

\nonstopmode
\makeatletter

\newcommand{\Rmnum}[1]{\expandafter\@slowromancap\romannumeral #1@}
\makeatother

\usepackage{cleveref}
\usepackage{microtype}
\usepackage{needspace}
\crefname{algorithm}{Algorithm}{Algorithms}
\crefname{table}{Table}{Tables}
\usepackage[section]{placeins}

\numberwithin{equation}{section}%
\crefname{lemma}{Lemma}{Lemma}
\crefname{theorem}{Theorem}{Theorem}
\crefname{definition}{Definition}{Definition}
\crefname{remark}{Remark}{Remark}

\crefname{example}{Example}{Examples}
\crefname{table}{Table}{Tables}
\crefname{figure}{Figure}{Figures}
\newtheorem{theorem}{Theorem}[section]
\newtheorem{lemma}[theorem]{Lemma}
\theoremstyle{definition}
\newtheorem{example}[theorem]{Example}
\theoremstyle{remark}
\newtheorem{remark}[theorem]{Remark}
\newcommand{\keywords}[1]{\par\medskip\noindent\textbf{Keywords:} #1}
\newcommand{\subclass}[1]{\par\smallskip\noindent\textbf{2020 Mathematics Subject Classification:} #1}
\begin{document}



\title{A Weighted Integral-Regularized Finite Difference Scheme for the Tempered Fractional Laplacian\\
\large %
\thanks{The research of Dongling Wang is supported in part by the National Natural Science Foundation of China under grants 12271463,
and the Natural Science Foundation of Hunan Province under Grant 2026JJ50363.
The work of Mingyi Wang is supported by Postgraduate Scientific Research Innovation Project of Hunan Province (No. CX20240602). \\ Declarations of interest: none.}
}

\author{Mingyi Wang \qquad Lisen Ding \qquad Dongling Wang\thanks{Corresponding author.}\\[0.5em]
\small Hunan Research Center of the Basic Discipline Fundamental Algorithmic Theory\\
\small and Novel Computational Methods, National Center for Applied Mathematics in Hunan,\\
\small School of Mathematics and Computational Science, Xiangtan University,\\
\small Xiangtan 411105, Hunan, China\\[0.5em]
\small \texttt{202331510144@smail.xtu.edu.cn} (Mingyi Wang)\\
\small \texttt{dingmath15@smail.xtu.edu.cn} (Lisen Ding)\\
\small \texttt{wdymath@xtu.edu.cn} (Dongling Wang)\\
\small ORCID: 0000-0001-8509-2837 (Dongling Wang)}

\date{}

\maketitle



\begin{abstract}
The intrinsic singularity of the tempered fractional Laplacian (TFL) remains a major challenge in developing numerical methods that are simultaneously accurate, efficient, and easy to implement. We develop a weighted integral-regularized finite difference (WIRFD) method that regularizes the singular integrand via a multidimensional Taylor expansion incorporating a smooth window function. The resulting integral is decomposed into a regularized term, which is discretized by a punctured trapezoidal rule, and a directly evaluated correction term. For the multidimensional TFL operator, we derive an $O(h^{4-\alpha})$ truncation error bound in the $l^{\infty}$-norm for $\alpha\in(0,2)$ and $u\in C^s(\mathbb{R}^d)$ with $s\geq 8$ by introducing a smooth auxiliary function together with the aliasing formula. For the one-dimensional TFL equation, we establish stability in both the $l^2$- and $l^{\infty}$-norms and optimal $O(h^{4-\alpha})$ convergence for $\alpha\in[1,2)$ based on the strict diagonal dominance of the discrete matrix and a lower bound for its minimum eigenvalue. The Toeplitz structure of the discrete matrix enables FFT-based matrix-vector multiplication, and the resulting linear systems are solved efficiently by a preconditioned conjugate gradient (PCG) method. Numerical experiments corroborate the theoretical results, demonstrating the accuracy, efficiency, and robustness of the proposed method.
\keywords{Tempered fractional Laplacian equation \and weighted integral-regularized finite difference \and truncation error \and stability \and convergence}
\subclass{65N06 \and 35R11 \and 65N12 \and 65N15}
\end{abstract}



\section{Introduction}



This work presents a weighted integral-regularized finite difference (WIRFD) method for the tempered fractional Laplacian (TFL) equation subject to the homogeneous exterior Dirichlet condition \cite{DZ2019JSC}:
\begin{alignat}{2}\label{eq:1}
(-\Delta)_{\lambda}^{\frac{\alpha}{2}}u(x) &= f(x), \quad && x \in \Omega \subset \mathbb{R}^d, \nonumber \\
u(x) &= 0, \quad && x \in \Omega^c:=\mathbb{R}^d \setminus \Omega,
\end{alignat}
with fractional order $\alpha\in(0,2)$, tempering factor $\lambda\geq 0$, spatial dimension $d\in\{1,2,3\}$, and source function $f(x)$. The TFL $(-\Delta)_{\lambda}^{\frac{\alpha}{2}}$ takes the form of a Cauchy principal value (P.V.) integral \cite{DLTZ2018,DZ2019JSC,ZDF2019}:
\begin{align}\label{eq:2}\displaystyle
(-\Delta)_{\lambda}^{\frac{\alpha}{2}}u(x):=C_{d,\alpha}\text{P.V.}\int_{\mathbb{R}^d}{\frac{u(x)-u(y)}{e^{\lambda|x-y|}|x-y|^{d+\alpha}}}dy,
\end{align}
where the normalization constant $C_{d,\alpha}$ is given explicitly in terms of the Gamma function $\Gamma(\cdot)$:
\begin{align}\label{eq:3}
C_{d,\alpha}=\begin{cases}\displaystyle\frac{\alpha\Gamma\left(\frac{d+\alpha}{2}\right)}{2^{1-\alpha}\pi^{\frac{d}{2}}\Gamma\left(1-\frac{\alpha}{2}\right)},\quad &\lambda=0\  \text{or}\ \alpha=1,\\
\displaystyle \frac{\Gamma\left(\frac{d}{2}\right)}{2\pi^{\frac{d}{2}}|\Gamma(-\alpha)|},\quad &\lambda>0,\ \alpha\ne 1.\end{cases}
\end{align}
The tempered fractional Laplacian (TFL) is a nonlocal operator that preserves the singular kernel structure of the fractional Laplacian while incorporating finite-scale effects through exponential tempering.
For $\lambda=0$, the TFL $(-\Delta)_{\lambda}^{\frac{\alpha}{2}}$ recovers the fractional Laplacian $(-\Delta)^{\frac{\alpha}{2}}$. For $\lambda>0$, the kernel decays exponentially at infinity, so that the operator captures nonlocal effects through tempered long-range interactions. This suppresses the heavy-tailed behavior of the pure power-law kernel, providing a flexible mathematical framework for modeling finite-scale systems and bridging idealized heavy-tailed models and finite-scale real-world applications. When the TFL is restricted to a bounded domain, the homogeneous exterior Dirichlet condition provides the missing exterior information required in the definition of the nonlocal operator. It also allows the problem to be formulated in a variational framework within which coercivity, self-adjointness, spectral discreteness, and well-posedness can be established.


The TFL $(-\Delta)_{\lambda}^{\frac{\alpha}{2}}$ serves as the infinitesimal generator of a tempered L\'evy process \cite{DLTZ2018}, characterizing the transition from anomalous diffusion to normal diffusion \cite{BM2010,MZB2008,Jan2007}. The TFL equation finds widespread application across diverse domains, encompassing finance \cite{CGMY2002,SMC2015}, physics \cite{R2000}, biology \cite{JHMJM2013}, and geophysics \cite{MZB2008,ZMP2012}. Owing to the general lack of a closed-form solution, the development of concise and efficient numerical methods is essential. However, the inherent nonlocality, kernel singularity, and multidimensional nature of this class of equations pose substantial challenges for constructing effective numerical methods. For $\lambda=0$, a mature numerical framework is well established, primarily comprising finite difference methods \cite{DvZ2018,DZ2019,HO2014,HO2016,MY2020,HZD2021,HW2022,YCYL2025}, finite element methods \cite{AB2017,ABB2017,DG2013,SL2024}, and spectral methods \cite{MS2017,TWYZ2020,ZZ2024}. For $\lambda>0$, by contrast, available numerical methods for the TFL equation remain limited, particularly those that simultaneously offer high-order accuracy, rigorous stability and convergence analysis, and efficient implementation in multiple dimensions. This gap motivates the development of robust and efficient numerical methods for the TFL equation.

In recent years, considerable effort has been devoted to the numerical approximation of the TFL equation, with particular attention to the nonlocal coupling and the hypersingular kernel. Among the earliest studies on numerical methods for the one-dimensional TFL equation, Zhang et al. \cite{ZDK2018} developed a Riesz basis finite element method, providing rigorous proofs for the well-posedness of its Galerkin weak formulation and the convergence of the proposed method. Thereafter, they devised a finite difference scheme, demonstrating its convergence and employing tailored preconditioners to solve the resulting linear system \cite{ZDF2019}. Aiming for higher accuracy, Duo and Zhang \cite{DZ2019JSC} proposed a second-order finite difference scheme, systematically investigating the tempering effects on the solution. In two dimensions, Sun et al. \cite{SND2021} formulated a finite difference scheme based on the weighted trapezoidal rule and bilinear interpolation, deriving the corresponding error estimates. Meanwhile, Yan et al. \cite{YDN2020} presented a finite difference scheme using linear and bilinear interpolation for the nonsingular components and quadratic and biquadratic interpolation for the singular components, proving that both the truncation and numerical errors are of order $O(h^{2-\alpha})$. In a more recent development, Cui et al. \cite{CDH2025} contributed an efficient and accurate second-order finite difference scheme built on a generating function, delivering a detailed stability and convergence analysis. Pursuing higher accuracy, Wang et al. \cite{WW2026} constructed higher-order finite difference methods using new generating functions through discrete symbols, establishing stability and convergence of orders $p=4,6,8$.


The methods reviewed above have made substantial advances in accuracy, singularity treatment, and dimensionality. However, a direct real-space regularization framework that extends singularity subtraction to the TFL, retains high-order accuracy for all $\alpha\in(0,2)$, and admits a rigorous multidimensional truncation error analysis remains less developed. This motivates us to propose a concise and efficient weighted integral-regularized finite difference (WIRFD) method for the TFL equation. The principal innovations and contributions of this work are as follows.

\begin{itemize}

  \item \textbf{A high-order WIRFD scheme for the TFL.} We propose a WIRFD scheme for the TFL based on an integral regularization strategy combining a multidimensional Taylor expansion, a smooth window function, and an addition-subtraction technique. The resulting regularized term is discretized by a punctured trapezoidal rule, while the correction term is directly evaluated, yielding a high-order scheme.
      By introducing a smooth auxiliary function and employing the aliasing formula, we prove an $O(h^{4-\alpha})$ truncation error estimate in the $l^\infty$-norm for $\alpha\in(0,2)$, assuming $u\in C^s(\mathbb{R}^d)$ ($s\ge8$) and a suitable window function $w \in C^\gamma([0,\infty))$ ($\gamma\geq4$). This analysis extends the singularity subtraction framework of \cite{MY2020} from the fractional Laplacian to the TFL and provides a multidimensional truncation error analysis for the proposed scheme.

  \item \textbf{Stability and optimal convergence for the $d=1$ TFL equation.} For the $d=1$ TFL equation with $\alpha\in[1,2)$, we establish stability and optimal $O(h^{4-\alpha})$ convergence in the $l^2$- and $l^\infty$-norms. The proof relies on a systematic analysis of the discrete matrix, including strict diagonal dominance and a lower bound for its minimum eigenvalue.
\end{itemize}


The remainder of this article is organized as follows. Section 2 devises a WIRFD scheme for the multidimensional TFL via an integral
regularization strategy. Section 3 rigorously establishes an $O(h^{4-\alpha})$ truncation error bound for the proposed scheme in the $l^\infty$-norm, presenting the corresponding algorithmic implementation. Section 4 demonstrates the stability and $O(h^{4-\alpha})$ convergence of the WIRFD method for the $d=1$ TFL equation in both the $l^2$- and $l^\infty$-norms, detailing the algorithmic procedure incorporating a preconditioner. Section 5 numerically examines the effects of the exponential factor $\lambda$ on the error and convergence rate of the WIRFD scheme for the TFL and validates its performance through systematic numerical experiments. Section 6 concludes the paper and outlines prospects for future research.



\section{Finite difference discretization of the multidimensional TFL}

When $\alpha>1$, the TFL defined in \eqref{eq:2} becomes hypersingular as $x \to y$, rendering direct quadrature unreliable. To circumvent such numerical inaccuracy, this section constructs a weighted integral-regularized finite difference (WIRFD) scheme for the TFL in $d=1,2,3$ via singularity subtraction, which regularizes the singular integrand and allows the resulting integral to be accurately discretized by the punctured trapezoidal rule.

\subsection{Singular integrand regularization}

For $u\in C^{n+1}(\mathbb{R}^d)$ and $x\in\mathbb{R}^d$, the Taylor expansion of $u$ about $x$, evaluated at $x+y$, takes the form
\begin{align}\label{eq:2.1}
u(x+y) = u(x)+\sum_{1\leq|k|\leq n}\frac{D^k u(x)}{k!}y^k+R_n(y),\quad y\to 0,
\end{align}
where $k=(k_1,\ldots,k_d)\in\mathbb{N}_0^d$ is a multi-index, $\mathbb{N}_0=\{0,1,2,\ldots\}$, and
$|k|=\sum_{l=1}^d k_l,\, k!=\prod_{l=1}^d{k_l}!,\, D^k =\frac{\partial^{|k|}}{\partial x_1^{k_1} \ldots \partial x_d^{k_d}},\, y^k =\prod_{l=1}^d (y_l)^{k_l}.$
The integral remainder is expressed as
\begin{align}\label{eq:2.2}
R_n(y) = \sum_{|\beta|=n+1}\frac{n+1}{\beta!}y^{\beta}\int_{0}^1(1-t)^{n}D^{\beta}u(x+ty)\,dt.
\end{align}
Since $u\in C^{n+1}(\mathbb{R}^d)$, each $D^\beta u$ with $|\beta|=n+1$ is bounded in a neighborhood of $x$. Consequently, $R_n(y)=O(|y|^{n+1})$ as $y\to 0$.


In what follows, the singularity subtraction technique based on the Taylor expansion is employed to regularize the singular integrand. The numerator factor of the resulting regularized integrand is then rigorously proved to belong to $C^q(\mathbb{R}^d)$ for some $q\in \mathbb{N}_0$.


For singularity subtraction, we introduce a compactly supported window function $w(\rho)$, with $\rho=|y|$, satisfying
\begin{align}\label{eq:2.3}
w(\rho)=1+O(\rho^\gamma),\quad \rho\to 0,\quad \gamma\in \mathbb{N}_0.
\end{align}
For $\gamma\geq2$, with $\operatorname{supp}w\subset[0,L]$ and $w \neq 0$ on $[0,L)$, the condition $w\in C^\gamma([0,\infty))$ implies $w^{(\ell)}(0)=0$ for $1\leq \ell\leq\gamma-1$ and $w^{(\ell)}(L)=0$ for $0\leq \ell\leq\gamma$, a fact used below in the local regularity analysis.

After a change of variables, the integral in \eqref{eq:2} is regularized by adding and subtracting, in the numerator of the integrand, the product of the window function $w(|y|)$ and the third-order Taylor polynomial of $u(x+y)-u(x)$,

\begin{align}\label{eq:2.4}
(-\Delta)_{\lambda}^{\frac{\alpha}{2}}u(x)
=
&C_{d,\alpha}\int_{\mathbb{R}^d}{\frac{ u(x)-u(x+y)+w(|y|)\sum_{1\leq|k|\leq 3}\frac{D^k u(x)}{k!}y^k}{e^{\lambda|y|}|y|^{d+\alpha}}}dy\nonumber\\
&-C_{d,\alpha}\int_{\mathbb{R}^d}{\frac{ w(|y|)\sum_{1\leq|k|\leq 3}\frac{D^k u(x)}{k!}y^k}{e^{\lambda|y|}|y|^{d+\alpha}}}dy\nonumber\\
:=&J_1+J_2,
\end{align}
where $J_1$ and $J_2$ denote the first and second terms on the right-hand side, namely the regularized term and the corresponding correction term, respectively.

We now show that \eqref{eq:2.4} removes the singularity in \eqref{eq:2} by proving that the numerator of the integrand for $J_1$ belongs to $C^q(\mathbb{R}^d)$ and that the integrands of $J_1$ and $J_2$ are integrable in $y$.


\begin{lemma}\label{L2.1}
Let $u\in C^s(\mathbb{R}^d)$ be compactly supported, with $s\geq 4$, and let $w\in C^\gamma([0,\infty))$ be as in \eqref{eq:2.3}, with $w(\rho)=0$ for $\rho\geq L>0$. Set
\begin{align}\label{eq:2.5}
v(y):=u(x)-u(x+y)+w(|y|)\sum_{1\leq|k|\leq 3}\frac{D^k u(x)}{k!}y^k.
\end{align}
Then the following properties hold.
\begin{enumerate}
\item[\textup{(i)}] $v\in C^q(\mathbb{R}^d)$ with $q=\min\{s-4,\gamma\}$, where $\gamma\in \mathbb{N}_0$ is as in \eqref{eq:2.3}. Moreover, for any multi-index $k$ with $0\leq |k|\leq \min\{\tau,q\}$,
    \begin{align}
    D^kv(y)=O\left(|y|^{\tau-|k|}\right),\qquad y\to 0,
    \end{align}
    where $\tau=\min\{4,\gamma+1\}$.
\item[\textup{(ii)}] For $\gamma\in \mathbb{N}_0$, the integrand of $J_1$ in \eqref{eq:2.4} is absolutely integrable over $\mathbb{R}^d$.
\item[\textup{(iii)}] $J_2$ in \eqref{eq:2.4} is given explicitly by
    \begin{align}\label{m2.7}
    J_2=-\frac{C_{d,\alpha}\Delta u(x)}{2d}\int_{[-L,L]^d}{\frac{w(|y|)}{e^{\lambda|y|}|y|^{d+\alpha-2}}}dy= - \frac{C_{d,\alpha}\pi^{\frac{d}{2}}\Delta u(x)}{d\Gamma\left(\frac{d}{2}\right)}\int_{0}^{L}\frac{w(\rho)}{e^{\lambda \rho}\rho^{\alpha-1}}d\rho,
    \end{align}
  where $\Gamma(\cdot)$ denotes the Gamma function.
\end{enumerate}
\end{lemma}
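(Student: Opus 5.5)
The plan is to treat the three parts separately. The common tool is the splitting
\begin{align*}
v(y)=-R_3(y)+\bigl(w(|y|)-1\bigr)P_3(y),\qquad P_3(y):=\sum_{1\le|k|\le3}\frac{D^ku(x)}{k!}y^k,
\end{align*}
obtained from the Taylor expansion \eqref{eq:2.1} with $n=3$.

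\textbf{Part (i), regularity.} The map $y\mapsto u(x)-u(x+y)$ lies in $C^s$, and $P_3$ is a polynomial. The issue is the radial factor $w(|y|)$, since $|y|$ is not smooth at $y=0$. For $\gamma\ge2$ we know $w^{(\ell)}(0)=0$ for $1\le\ell\le\gamma-1$. We then write $w(\rho)=1+\rho^\gamma g(\rho)$ near $0$, with $g$ continuous (Taylor with integral remainder), or equivalently observe that the even extension of $w$ is $C^{\gamma-1}$ with a controlled $\gamma$-th derivative. Composition with $|y|$ then gives $w(|y|)\in C^{\gamma}$ away from the origin. At the origin, the derivatives $D^k[w(|y|)-1]=O(|y|^{\gamma-|k|})$ extend continuously for $|k|\le\gamma-1$. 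Multiplying by the vanishing polynomial $P_3=O(|y|)$ gains one further order, so $(w-1)P_3\in C^{\gamma}$ near $0$. Away from $0$ there is no loss, since $w(|\cdot|)\in C^\gamma$ there. This gives $v\in C^{\min\{s,\gamma\}}$.

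The stated $q=\min\{s-4,\gamma\}$ is weaker than this. The $s-4$ arises naturally if one instead differentiates the remainder representation \eqref{eq:2.2} with $n=3$: each $D^\beta u$ with $|\beta|=4$ is only $C^{s-4}$. So I would prove the claim via that representation, which also gives the decay estimate. For $|k|\le q$, differentiating $R_3(y)=\sum_{|\beta|=4}\frac{4}{\beta!}y^\beta\int_0^1(1-t)^3D^\beta u(x+ty)\,dt$ by Leibniz gives terms of the form $y^{\beta-m}$ times bounded integrals, hence $O(|y|^{4-|k|})$. Similarly $D^k[(w(|y|)-1)P_3]=O(|y|^{\gamma+1-|k|})$ by Leibniz, using $D^j(w-1)=O(|y|^{\gamma-|j|})$ and $D^iP_3=O(|y|^{1-|i|})$ for $|i|\le1$, with $D^iP_3=O(1)$ otherwise. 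These higher derivatives of $P_3$ multiply factors with more vanishing, so the total order is still at least $\gamma+1-|k|$. Taking the minimum gives $\tau=\min\{4,\gamma+1\}$.

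\textbf{Part (ii), integrability.} Split $\mathbb{R}^d$ into $|y|\le1$ and $|y|>1$. Near $0$, part (i) with $k=0$ gives $|v|\lesssim|y|^{\tau}$, so the integrand is $O(|y|^{\tau-d-\alpha})$. This is integrable whenever $\tau>\alpha$, which holds since $\tau\ge2>\alpha$ for $\gamma\ge1$. For $\gamma=0$ there is a gap: $v=O(|y|)$ only gives $\tau=1$, so I would need to remark that the odd terms cancel in the principal value sense. I expect this low-$\gamma$ edge case to be the main obstacle, and I would handle it through the symmetric principal value if needed. At infinity, $u$ has compact support and $w=0$, so $v=u(x)-u(x+y)$ is bounded and the kernel $|y|^{-d-\alpha}$ is integrable.

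\textbf{Part (iii), the correction term.} The integrand $w(|y|)e^{-\lambda|y|}|y|^{-d-\alpha}P_3(y)$ has a radial weight. By the symmetry $y\mapsto -y$, all terms with $|k|$ odd vanish. For $|k|=2$, the mixed monomials $y_iy_j$ with $i\ne j$ vanish by reflection in a single coordinate. The diagonal terms $y_i^2$ have equal integrals, each equal to $\frac1d$ of the integral of $|y|^2$. This gives
\begin{align*}
\sum_i\frac{\partial_{ii}u}{2}\cdot\frac1d\int\frac{w|y|^2}{e^{\lambda|y|}|y|^{d+\alpha}}\,dy,
\end{align*}
which is the first expression; the domain reduces to $[-L,L]^d$ by the support of $w$. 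These cancellations use absolute convergence: the $|k|=2$ integrand is $O(|y|^{2-d-\alpha})$, which is integrable, while the $|k|=1,3$ terms are handled in the principal value sense. Finally, passing to polar coordinates with surface area $|S^{d-1}|=2\pi^{d/2}/\Gamma(d/2)$ turns the integral into $\frac{2\pi^{d/2}}{\Gamma(d/2)}\int_0^L w(\rho)e^{-\lambda\rho}\rho^{1-\alpha}\,d\rho$, and multiplying by $\frac{1}{2d}$ gives the stated formula.
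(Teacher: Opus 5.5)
Apart from the case $\gamma=0$, which you rightly single out, your proposal is correct and follows the paper's proof. You use the same splitting $v=(w(|y|)-1)P_3-R_3$ (the paper's \eqref{eq:2.8}), Leibniz-type bounds for $D^kv$, and the same reflection/permutation symmetry plus polar coordinates for $J_2$.

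Your deviations are refinements rather than a new route:
\begin{itemize}
\item In (i) you correctly note that $v\in C^{\min\{s,\gamma\}}$. The paper's loss of four derivatives is an artifact of differentiating the integral form \eqref{eq:2.2}, since $R_3=u(x+\cdot)-u(x)-P_3$ is itself $C^s$. The factor $P_3=O(|y|)$ absorbs the possible discontinuity of the $\gamma$-th derivatives of $w(|y|)$ at the origin.
\item In (ii) your direct bound $|v|\lesssim|y|^{\tau}$ with $\tau\ge2>\alpha$ gives genuine absolute integrability for $\gamma\ge1$. The paper instead discards the odd terms on $Q_\delta$ by symmetry. That is a cancellation argument and does not by itself bound the integral of the absolute value.
\end{itemize}

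For $\gamma=0$, however, a principal-value argument cannot close the gap, because absolute integrability actually fails there. Take $d=1$, $u'(x)\neq0$, and $w(\rho)=1-\sqrt{\rho/L}$ on $[0,L]$ with $w=0$ beyond. This $w$ is continuous, satisfies $0\le w\le1$, and satisfies \eqref{eq:2.3} with $\gamma=0$. Then $|v(y)|\sim|u'(x)|\,|y|^{3/2}/\sqrt{L}$ as $y\to0$. So the integrand of $J_1$ behaves like $|y|^{1/2-\alpha}$ and is not integrable for $\alpha\ge 3/2$. If $w(0)\neq1$, which $\gamma=0$ formally allows, integrability fails already for $\alpha\ge1$.

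The paper's proof has the same defect, hidden in its symmetry step for $I_1$. The honest fix is to state (ii) for $\gamma\ge1$, or in the principal-value sense when $\gamma=0$. This costs nothing, since $\gamma\ge4$ is assumed from Theorem~\ref{T3.1} on.

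Similarly, in (iii) only the $|k|=1$ terms need the principal-value reading. The $|k|=3$ terms are absolutely integrable because $3-\alpha>0$.
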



\begin{proof}
Part (i). For $y\in Q_\delta:=[-\delta,\delta]^d$ with $\delta>0$ sufficiently small, substituting the expansion of $u(x)-u(x+y)$ from \eqref{eq:2.1} into \eqref{eq:2.5} gives
\begin{align}\label{eq:2.8}
v(y) = (w(|y|)-1)\sum_{1\leq|k|\leq 3}\frac{D^k u(x)}{k!}y^k-R_3(y).
\end{align}
\eqref{eq:2.8} relates the local regularity of $v$ in $Q_{\delta}$ to that of $w$ and $R_3$. The vanishing derivatives of $w-1$ at the origin are the key point in verifying the Cartesian regularity of the products $(w(|y|)-1)y^k$. In light of \eqref{eq:2.2} and $u\in C^s(\mathbb{R}^d)$, we have $R_3\in C^{s-4}(Q_\delta)$. Combining this with $w\in C^\gamma([0,\infty))$, we obtain
\begin{align}
v\in C^q(Q_\delta), \quad q = \min\{s-4,\gamma\}.
\end{align}
Under the same assumptions, \eqref{eq:2.5} also implies that
\begin{align}
v\in C^{\min\{s-3,\gamma\}}(\mathbb{R}^d\setminus Q_\delta).
\end{align}
Hence, combining the interior and exterior regularity estimates, we conclude that $v \in C^q(\mathbb{R}^d)$.

Applying \eqref{eq:2.2} and \eqref{eq:2.3} to \eqref{eq:2.8}, we deduce that
\begin{align}\label{eq:02.9}
v(y)=O(|y|^{\tau}),\quad y\to 0,
\end{align}
where $\tau=\min\{4,\gamma+1\}$.
From \eqref{eq:02.9} and $v\in C^{q}(Q_\delta)$, differentiating \eqref{eq:2.8} yields, for $0\leq |k|\leq \min\{\tau,q\}$,
\begin{align}
D^kv(y)=O\left(|y|^{\tau-|k|}\right), \quad y\to 0.
\end{align}
Part (ii). By \eqref{eq:2.5}, $J_1$ in \eqref{eq:2.4} can be decomposed as
\begin{align}\label{eq:2.9}
J_1&=C_{d,\alpha}\int_{\mathbb{R}^d}\frac{v(y)}{e^{\lambda|y|}|y|^{d+\alpha}}dy\nonumber\\
&=C_{d,\alpha}\left(\int_{Q_\delta}\frac{v(y)}{e^{\lambda|y|}|y|^{d+\alpha}}dy+\int_{Q_{R_0}\setminus Q_{\delta}}\frac{v(y)}{e^{\lambda|y|}|y|^{d+\alpha}}dy+\int_{\mathbb{R}^d\setminus Q_{R_0}}\frac{v(y)}{e^{\lambda|y|}|y|^{d+\alpha}}dy\right)\nonumber\\
&:= I_1+I_2+I_3,
\end{align}
where $Q_{R_0}=[-R_0,R_0]^d$, and $I_1$, $I_2$, $I_3$ are the three right-hand side terms, respectively.

For $\gamma\in \mathbb{N}_0$, applying \eqref{eq:2.3} and \eqref{eq:2.8} to $I_1$ yields
\begin{align}\label{eq:2.11}
I_1 = C_{d,\alpha}\int_{Q_\delta}\frac{O(|y|^{\gamma})\sum_{1\leq|k|\leq 3}\frac{D^k u(x)}{k!}y^k-R_3(y)}{e^{\lambda|y|}|y|^{d+\alpha}}dy.
\end{align}
Using \eqref{eq:2.2}, the symmetry of $Q_\delta$, and a polar coordinate transformation, we obtain from \eqref{eq:2.11} the following estimate:
\begin{align}\label{eq:2.12}
I_1 &= C_{d,\alpha}\int_{Q_\delta}\frac{O(|y|^\gamma)\sum_{|k|=2}\frac{D^k u(x)}{k!}y^k-R_3(y)}{e^{\lambda|y|}|y|^{d+\alpha}}dy\nonumber\\
&=C_{d,\alpha}\int_{Q_\delta}\frac{O(|y|^{\min\{4, \gamma+2\}})}{e^{\lambda|y|}|y|^{d+\alpha}}dy\nonumber\\
&\leq C\int_{0}^{\delta} \rho^{\min\{3, \gamma+1\}-\alpha}d\rho\nonumber\\
&= C\frac{\delta^{\min\{4, \gamma+2\}-\alpha}}{\min\{4, \gamma+2\}-\alpha}.
\end{align}
Here and throughout, $C$ denotes a generic positive constant. This proves that $J_1$  is absolutely integrable near the origin.

For the term $I_2$, combining the analogous analysis in \eqref{eq:2.12} with $v\in C^{q}(\mathbb{R}^d)$, we derive
\begin{align}\label{r2.16}
|I_2|\leq C\left(1+\int_{\delta}^{\sqrt{d}R_0}\rho^{\min\{3, \gamma+1\}-\alpha}d\rho\right)\leq C,
\end{align}
where $0<\delta<R_0$.

For the term $I_3$, employing \eqref{eq:2.5}, \eqref{eq:2.9}, the triangle inequality, and the domain symmetry, one obtains the bound:
\begin{align}\label{eq:2.13}
|I_3|&\leq C\left(\left|\int_{\mathbb{R}^d\setminus Q_{R_0}}\frac{u(x)-u(x+y)}{e^{\lambda|y|}|y|^{d+\alpha}}dy\right|+\left|\int_{\mathbb{R}^d\setminus Q_{R_0}}{\frac{w(|y|)\sum_{1\leq|k|\leq 3}\frac{D^k u(x)}{k!}y^k}{e^{\lambda|y|}|y|^{d+\alpha}}}dy\right|\right)\nonumber\\
&\leq C\left(\left|\int_{\mathbb{R}^d\setminus Q_{R_0}}\frac{u(x)-u(x+y)}{e^{\lambda|y|}|y|^{d+\alpha}}dy\right|+\left|\frac{\Delta u(x)}{2}\int_{\mathbb{R}^d\setminus Q_{R_0}}{\frac{w(|y|)(e_1\cdot y )^2}{e^{\lambda|y|}|y|^{d+\alpha}}}dy\right|\right)\nonumber\\
&\leq C\left(\left|\int_{\mathbb{R}^d\setminus Q_{R_0}}\frac{u(x)-u(x+y)}{e^{\lambda|y|}|y|^{d+\alpha}}dy\right|+\left|\frac{\Delta u(x)}{2d}\int_{\mathbb{R}^d\setminus Q_{R_0}}{\frac{w(|y|)|y|^2}{e^{\lambda|y|}|y|^{d+\alpha}}}dy\right|\right),
\end{align}
where $e_1$ is the first standard basis vector in $\mathbb{R}^d$.

By a polar coordinate transformation, $\lambda\geq 0$, the compact support of $u\in C^s(\mathbb{R}^d)$, and $w(\rho)=0$ for $\rho\geq L$, we have
\begin{align}
|I_3|\leq C\left(\int_{R_0}^\infty\rho^{-\alpha-1}d\rho+\int_{\delta}^{L}\rho^{1-\alpha}d\rho\right)\leq C.
\end{align}
The above arguments establish the assertion in part (ii).

Part (iii). Using the domain symmetry and $w(\rho)=0$ for $\rho\geq L$, it follows from \eqref{eq:2.4} that
\begin{align}\label{eq:2.6}
J_2 &=-\frac{C_{d,\alpha}\Delta u(x)}{2}\int_{\mathbb{R}^d}{\frac{w(|y|)(e_1\cdot y )^2}{e^{\lambda|y|}|y|^{d+\alpha}}}dy\nonumber\\
&=-\frac{C_{d,\alpha}\Delta u(x)}{2d}\int_{[-L,L]^d}{\frac{w(|y|)}{e^{\lambda|y|}|y|^{d+\alpha-2}}}dy\nonumber\\
&:=-\frac{C_{d,\alpha}\Delta u(x)}{2d}K_1,
\end{align}
where $K_1$ is defined as the last integral.
Passing to polar coordinates, $K_1$ admits the explicit representation
\begin{align}\label{eq:2.7}
K_1 = \int_{[-L,L]^d}{\frac{w(|y|)}{e^{\lambda|y|}|y|^{d+\alpha-2}}}dy= \frac{2\pi^{\frac{d}{2}}}{\Gamma\left(\frac{d}{2}\right)}\int_{0}^{L}\frac{w(\rho)}{e^{\lambda \rho}\rho^{\alpha-1}}d\rho.
\end{align}
Equations \eqref{eq:2.6} and \eqref{eq:2.7} reduce $J_2$ to a one-dimensional radial integral. For the polynomial window in \eqref{eq:3.19}, this integral is a finite linear combination of incomplete Gamma functions and can therefore be evaluated directly. This completes the proof.
\end{proof}


\subsection{WIRFD Scheme}
The regularity assumptions $s\geq 8$ and $\gamma\geq 4$ in \cref{L2.1} imply $v\in C^q(\mathbb{R}^d)$ with $q\geq4$. For $d=1, 2$, the integrand $\frac{v(y)}{e^{\lambda|y|}|y|^{d+\alpha}}$ vanishes at $y=0$. This, together with the regularity of $v$, renders the punctured trapezoidal rule admissible for $J_1$. For $d=3$, the integrand need not admit a finite continuous extension at the origin; nevertheless, the punctured trapezoidal rule remains applicable, as justified by the cutoff argument in \cref{T3.1}. The regularity of $u$ further allows the use of second-order central differences for $\Delta$ in both $J_1$ and $J_2$. We thus discretize as follows.

For simplicity, take $\Omega=(-1,1)^d$ and discretize it uniformly with a spatial step size $h = \frac{2}{N_1}$, where $N_1$ is a positive integer. Define the lattice $h\mathbb{Z}^d=\{y_j = jh: j=(j_1,\ldots,j_d)\in\mathbb{Z}^d\}$. The interior grid is defined as \begin{align}\label{eq:a2.13}
\Omega_h=\{(-1+j_1h,\ldots,-1+j_dh):  1\leq j_l\leq N_1-1,\ l=1,\ldots,d\},
\end{align}
which consists of $N = (N_1-1)^d$ nodes.

Exploiting the punctured trapezoidal rule for $J_1$ in \eqref{eq:2.4}, second-order central differences for $\Delta$, domain symmetry, and \cref{L2.1}, the discrete WIRFD scheme for the TFL reads:
\begin{align}\label{eq:2.14}
(-\Delta_h)_{\lambda}^{\frac{\alpha}{2}}u(x):=
&C_{d,\alpha}h^d\left(\sum_{j\neq 0}\frac{1}{e^{\lambda|y_j|}|y_j|^{d+\alpha}}u(x)-\sum_{j\neq 0}\frac{u(x+y_j)}{e^{\lambda|y_j|}|y_j|^{d+\alpha}}+\frac{\Delta_hu(x)}{2d}\sum_{j\neq 0}{\frac{w(|y_j|)}{e^{\lambda|y_j|}|y_j|^{d+\alpha-2}}}\right)\nonumber\\
&-\frac{C_{d,\alpha}\Delta_h u(x)}{2d}\int_{[-L,L]^d}{\frac{w(|y|)}{e^{\lambda|y|}|y|^{d+\alpha-2}}}dy\nonumber\\
=& C_{d,\alpha}h^d \left(\left(S_{1,h}-S_1\right)\Delta_hu(x)+S_2 u(x)-\sum_{j\neq 0}\frac{u(x+y_j)}{e^{\lambda|y_j|}|y_j|^{d+\alpha}}\right), \quad \forall x\in \Omega_h,
\end{align}
where
\begin{align}\label{eq:2.15}
&S_{1,h}=\frac{1}{2d}\sum_{j\neq 0}{\frac{w(|y_j|)}{e^{\lambda|y_j|}|y_j|^{d+\alpha-2}}},\quad S_1= \frac{1}{2dh^d}\int_{[-L,L]^d}{\frac{w(|y|)}{e^{\lambda|y|}|y|^{d+\alpha-2}}}dy,\quad
S_2=\sum_{j\neq 0}\frac{1}{e^{\lambda|y_j|}|y_j|^{d+\alpha}},
\end{align}
and the discrete Laplacian $\Delta_h$ satisfies the second-order consistency relation
\begin{align}\label{eq:2.16}
\Delta u(x)=\Delta_h u(x)+O(h^2)=\frac{1}{h^2}\sum_{l=1}^d\left[u(x-e_lh)-2u(x)+u(x+e_lh)\right]+O(h^2).
\end{align}

Define the grid function $U=\{u_j\}_{j\in[1,N_1-1]^d}$ by $u_j = u(x_j)$ for $x_j \in \Omega_h$. The WIRFD scheme \eqref{eq:2.14} can be cast in the following matrix-vector form:
\begin{align}\label{eq:02.16}
(-\Delta_h)_{\lambda}^{\frac{\alpha}{2}}U=AU=C_{d,\alpha}h^d(D+T)U,
\end{align}
where $A=C_{d,\alpha}h^d(D+T)\in \mathbb{R}^{N \times N}$ has entries $A_{r(i),p(j)}=C_{d,\alpha}h^d(D_{r(i),p(j)}+T_{r(i),p(j)})$ for multi-indices $i,j\in[1,N_1-1]^d$, with $r(i)$, $p(j)$  the corresponding linear index maps. The matrices $D$ and $T$ are specified entrywise as
\begin{align}\label{eq:2.17}
D_{r(i),p(j)}=d_{i,j}=\begin{cases}\dfrac{S_{1,h}-S_1}{h^2},\quad&\|i-j\|_1=1,\\[6pt]
-\dfrac{2d(S_{1,h}-S_1)}{h^2},\quad &i=j,\\[6pt]
 0,\quad &\text{otherwise},
\end{cases}\qquad
T_{r(i),p(j)}=t_{i,j}=\begin{cases}
S_2,\quad&i=j,\\
-\dfrac{1}{e^{\lambda|i-j|h}|(i-j)h|^{d+\alpha}},\quad &i\neq j,
\end{cases}
\end{align}
where $\|i-j\|_1 = \sum_{l=1}^d |i_l - j_l|$ represents the Manhattan distance in $\mathbb{R}^d$. From \eqref{eq:2.17}, $A$ and $T$ are symmetric multilevel Toeplitz matrices (block Toeplitz with Toeplitz blocks when $d=2$), while $D$ is sparse and has the same multilevel Toeplitz structure.


\section{Error analysis of the WIRFD scheme for the multidimensional TFL}
In this section, we use the aliasing formula to derive the $l^{\infty}$-norm truncation error of the WIRFD scheme and describe its algorithmic procedure. The maximum norm of a grid function $U$ on $\Omega_h$ is defined by $\|U\|_{l^{\infty}(\Omega_h)}:= \max_{j\in[1,N_1-1]^d}|u_j|$.

\begin{lemma}\label{l3.1}(Aliasing formula)
\cite{G2014,ZZ2024}. Assume $u \in L^1(\mathbb R^d) \cap L^2(\mathbb R^d)$ and that the distributional derivatives $D^k u \in L^1(\mathbb R^d)$ for all multi-indices $k$ with $1 \le |k|\leq \varrho$, where $\varrho\in\mathbb{Z}^+$ and $\varrho>d$. Then, for any $h>0$, the semi-discrete Fourier transform $\check{u}(\xi)$ and the continuous Fourier transform $\hat{u}(\xi)$ satisfy the aliasing formula
\begin{align}\label{eq:f1}
\check{u}(\xi)=\sum_{j\in\mathbb{Z}^d}\hat{u}\left(\xi+\frac{2\pi j}{h}\right),\quad \xi\in\left[-\frac{\pi}{h},\frac{\pi}{h}\right]^d,
\end{align}
where
$\check{u}(\xi)=h^d\sum_{x\in h\mathbb{Z}^d}u(x)e^{-i\xi\cdot x}$, and $ \hat{u}(\xi)=\int_{\mathbb{R}^d}u(x)e^{-i\xi\cdot x}dx.$
\end{lemma}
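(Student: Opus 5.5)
The plan is to derive the aliasing formula as an instance of the Poisson summation formula, applied to the modulated function $g(x):=u(x)e^{-i\xi\cdot x}$ for a fixed $\xi\in[-\pi/h,\pi/h]^d$. By definition, $\check u(\xi)=h^d\sum_{x\in h\mathbb{Z}^d}g(x)$. The $h$-periodization
\begin{align*}
G(x):=\sum_{m\in\mathbb{Z}^d} g(x+mh)
\end{align*}
is an $h$-periodic function whose value at $x=0$ is the lattice sum. Its Fourier coefficients are computed by unfolding the sum over the torus $[0,h)^d$ into an integral over $\mathbb{R}^d$:
\begin{align*}
\frac{1}{h^d}\int_{[0,h)^d}G(x)e^{-2\pi i j\cdot x/h}\,dx=\frac{1}{h^d}\,\hat u\left(\xi+\frac{2\pi j}{h}\right),\qquad j\in\mathbb{Z}^d.
\end{align*}
The unfolding is legitimate because $u\in L^1(\mathbb{R}^d)$, so Fubini–Tonelli applies. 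Hence, at least formally, $G(0)=h^{-d}\sum_j\hat u(\xi+2\pi j/h)$, and multiplying by $h^d$ gives \eqref{eq:f1}.

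The real work is justifying the pointwise identity $G(0)=\sum_j \widehat{G}_j$, with absolute convergence on both sides. I would establish two facts.

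\emph{Summability of the Fourier side.} Since $D^k u\in L^1$ for $|k|\le\varrho$, we have $|\eta^k\hat u(\eta)|\le\|D^ku\|_{L^1}$. Taking $k=\varrho e_l$ for each coordinate and combining with $|\hat u|\le\|u\|_{L^1}$ gives
\begin{align*}
|\hat u(\eta)|\le C(1+|\eta|)^{-\varrho}.
\end{align*}
Because $\varrho>d$, the series $\sum_j(1+|\xi+2\pi j/h|)^{-\varrho}$ converges uniformly in $\xi$, so the right-hand side of \eqref{eq:f1} converges absolutely and uniformly, and the trigonometric series $\sum_j \widehat G_j e^{2\pi i j\cdot x/h}$ converges uniformly to a continuous function $H$.

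\emph{Identification of $G$ with $H$.} Both $G$ and $H$ lie in $L^1$ of the torus and have the same Fourier coefficients, so by uniqueness of Fourier coefficients $G=H$ a.e. To evaluate at the single point $x=0$, $G$ must also be continuous. I would obtain this from the regularity of $u$. The $W^{\varrho,1}$ hypothesis with $\varrho>d$ gives continuity and boundedness of $u$; in the paper's application $u$ is compactly supported and $C^s$. It remains to show that the periodization converges uniformly. I would use the fact that $u$ is absolutely continuous along lines with $\partial u\in L^1$; concretely, $\sum_m\sup_{x\in[0,h)^d}|u(x+mh)|$ is bounded by $C\sum_{|k|\le d}\|D^ku\|_{L^1}$. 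This comes from a Sobolev-type estimate on each cell, namely $\sup_Q|u|\le C_h\sum_{|k|\le d}\|D^ku\|_{L^1(Q)}$ for a cube $Q$, summed over cells.

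The main obstacle is this last step: uniform convergence of the periodized lattice sum under only $L^1$ derivative hypotheses, together with the pointwise identification. The cell-wise $W^{d,1}\hookrightarrow L^\infty$ estimate, which uses mixed derivatives up to order $d\le\varrho$, is the tool I would use. The $L^2$ assumption plays only an auxiliary role, making $\hat u$ well defined in $L^2$ as a consistency check. Since the lemma is quoted from \cite{G2014,ZZ2024}, I would ultimately cite it, but the Poisson-summation argument above is the self-contained route.
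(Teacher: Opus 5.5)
Your proof is correct, and it does more than the paper, which gives no proof of this lemma and simply imports it from \cite{G2014,ZZ2024}.

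Your route is Poisson summation:
\begin{itemize}
\item periodize $g=u\,e^{-i\xi\cdot x}$;
\item unfold the torus integral to get the Fourier coefficients $h^{-d}\hat u(\xi+2\pi j/h)$;
\item get absolute, uniform summability of the frequency side from $|\hat u(\eta)|\le C(1+|\eta|)^{-\varrho}$ with $\varrho>d$;
\item get continuity of the periodization from the cell-wise bound $\sup_Q|u|\le C_h\sum_{k\in\{0,1\}^d}\|D^ku\|_{L^1(Q)}$, summed over cells.
\end{itemize}
This proves the lemma under exactly the Sobolev-type hypotheses stated.

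This is worth having. The textbook Poisson summation theorem one would naturally cite assumes pointwise decay $|u(x)|\le C(1+|x|)^{-d-\delta}$, which is not among the hypotheses of \cref{l3.1}. Your $W^{d,1}$ cell estimate is what replaces that decay. Your argument also confirms that the $L^2$ assumption is superfluous. In addition, the restriction $\xi\in[-\pi/h,\pi/h]^d$ plays no role, since your proof gives \eqref{eq:f1} for every $\xi\in\mathbb{R}^d$.

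Two small points.
\begin{itemize}
\item The point $x=0$ is a corner of $[0,h)^d$. Run the Weierstrass M-test over closed cells, or over an arbitrary compact set (each of whose translates meets finitely many cells). That way uniform convergence holds on a full neighborhood of $0$, which is what continuity of $G$ at $0$ requires.
\item Your aside that in the paper's application $u$ is compactly supported is inaccurate. In \cref{T3.1} the lemma is applied to $\phi_h$, whose tail is $u(x)e^{-\lambda|y|}|y|^{-d-\alpha}$. Your argument never uses compact support, so nothing in it changes.
\end{itemize}
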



\begin{theorem}\label{T3.1} (Truncation error).
Retaining the hypotheses of \cref{L2.1}, suppose further that $u\in C^s(\mathbb{R}^d)$ with $s\geq 8$, and that $w$ satisfies $0\leq w(\rho)\leq 1$ with $w\in C^{\gamma}([0,\infty))$, $\gamma\geq 4$. Then, for the grid function $U$ associated with $u$, the WIRFD scheme \eqref{eq:2.14} for the regularized TFL \eqref{eq:2.4} admits the following truncation error bound
\begin{align}\label{eq:3.1}
\left\|(-\Delta)_{\lambda}^{\frac{\alpha}{2}}U - (-\Delta_h)_{\lambda}^{\frac{\alpha}{2}}U\right\|_{l^{\infty}(\Omega_h)}\leq C h^{4-\alpha},
\end{align}
where $C>0$ is independent of $h$, for all sufficiently small $h>0$.
\end{theorem}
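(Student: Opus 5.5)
The error splits as
\begin{align*}
(-\Delta)_{\lambda}^{\frac{\alpha}{2}}u(x)-(-\Delta_h)_{\lambda}^{\frac{\alpha}{2}}u(x)
=&\,C_{d,\alpha}\Big(\int_{\mathbb{R}^d}g(y)\,dy-h^d\sum_{j\neq0}g(y_j)\Big)\\
&+\frac{C_{d,\alpha}}{2d}\big(\Delta u(x)-\Delta_h u(x)\big)\Big(h^d\sum_{j\neq0}\phi(y_j)-\int_{[-L,L]^d}\phi(y)\,dy\Big),
\end{align*}
with $g(y)=v(y)e^{-\lambda|y|}|y|^{-d-\alpha}$, $\phi(y)=w(|y|)e^{-\lambda|y|}|y|^{2-d-\alpha}$, and $v$ as in \eqref{eq:2.5}. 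The first piece is the punctured trapezoidal error for $J_1$. The second piece comes from replacing $\Delta$ by $\Delta_h$ both inside the regularized sum and in the correction $J_2$ of \cref{L2.1}(iii). I handle the second piece first. By \eqref{eq:2.16}, $\Delta-\Delta_h=O(h^2)$ uniformly in $x$ since $s\ge 8$. The function $\phi$ is integrable with singularity $|y|^{2-d-\alpha}$, so the difference between its punctured lattice sum and its integral is $O(h^{2-\alpha})$ (standard for homogeneous singular kernels: the near-origin cells contribute $h^d\cdot h^{2-d-\alpha}$). The product is therefore $O(h^{4-\alpha})$.

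For the main term, I use a cutoff. Choose a smooth radial $\chi$ with $\chi\equiv1$ near $0$ and support in $|y|\le 2$, and write $g=g\chi(\cdot/\eta)+g(1-\chi(\cdot/\eta))$ with a fixed $\eta\le\delta$. The outer part $g_2=g(1-\chi(\cdot/\eta))$ is $C^q$ with $q\ge4$, is smooth away from the origin, and decays exponentially (or like $|y|^{-d-\alpha}$ when $\lambda=0$), so $u$ having compact support keeps its derivatives in $L^1$. Applying \cref{l3.1} at $\xi=0$ gives $h^d\sum_j g_2(y_j)-\int g_2=\sum_{j\neq0}\hat g_2(2\pi j/h)$. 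Integrating by parts $q$ times bounds $|\hat g_2(\zeta)|\le C|\zeta|^{-q}$, so the sum is $O(h^{q})$ when $q>d$. For $d=3$ with $q=4$ this is only marginal, so I would instead compare with $s\ge 8$ and take $\gamma$ so that $q\ge4>3$, which is enough. In every case the outer error is at most $O(h^4)$.

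The inner part $g_1=g\chi(\cdot/\eta)$ is the main obstacle, because it is singular at the origin. By \cref{L2.1}(i) with $\tau=4$, $D^kg_1(y)=O(|y|^{4-d-\alpha-|k|})$. Expanding $v$ further (using $s\ge8$), I write $v=P_4+P_5+\dots+P_7+O(|y|^8)$, where $P_m$ are homogeneous polynomials. Odd-degree terms cancel in the symmetric punctured sum and also integrate to zero. Each even term $P_m(y)e^{-\lambda|y|}|y|^{-d-\alpha}\chi$ is then handled by homogeneity: the substitution $y=h z$ shows that
\[
h^d\sum_{j\neq0}-\int
\]
applied to a degree-$(m-d-\alpha)$ homogeneous function, cut off smoothly, equals $h^{m-\alpha}c_m$ plus a smooth-tail error that is $O(h^\infty)$. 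Here $c_m$ is a generalized Riemann zeta/Epstein-type constant. Expanding $e^{-\lambda|y|}$ in powers of $|y|$ gives the same structure for each factor. The leading contribution is therefore $O(h^{4-\alpha})$. The remainder term with $O(|y|^8)$ times the kernel lies in $C^{\min(q,\cdot)}$ with integrable derivatives up to order $>d$, so the Fourier argument above again yields $o(h^{4-\alpha})$. The delicate points are showing that $e^{-\lambda|y|}$, which is non-smooth at $0$, contributes only terms of the form $|y|^{\text{odd}}$ that still produce $h^{m+1-\alpha}$, and tracking the constants uniformly in $x\in\Omega_h$, which follows because all bounds depend only on $\|u\|_{C^8}$. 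Summing the pieces gives \eqref{eq:3.1}.
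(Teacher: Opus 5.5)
Your splitting is the paper's $J_3+J_4$ in \eqref{eq:3.2}. For $J_3$ you estimate the \emph{difference} $h^d(S_{1,h}-S_1)$ as a punctured quadrature error. The paper instead bounds $h^dS_{1,h}$ and $h^dS_1$ separately by $Ch^{2-\alpha}$, which works only because it takes $L=N_\rho h$. For a window independent of $h$, as in the hypotheses of Lemma~\ref{L2.1}, each of these is $O(1)$, so your difference estimate is what is actually needed. Your justification is only the near-origin heuristic, but it closes easily. On cells away from the origin the product trapezoidal error is at most $Ch^{d+2}\max|D^2\phi|$, and these sum to $Ch^2\int_{|y|\ge h}|y|^{-d-\alpha}\,dy\le Ch^{2-\alpha}$. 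The cells touching the origin contribute $O(h^{2-\alpha})$ directly.

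\textbf{The gap is in the inner part of $J_4$.} Your expansion $v=P_4+\dots+P_7+O(|y|^8)$, with polynomials $P_m$, drops the window term. By \eqref{eq:2.8},
$v=(w(|y|)-1)\sum_{1\le|k|\le3}\frac{D^ku(x)}{k!}y^k-R_3(y)$.
The first summand is only $O(|y|^{\gamma+1})$, is not a polynomial, and for a general $w\in C^\gamma$ has no finite homogeneous expansion. With $\gamma=4$, your ``$O(|y|^8)$'' remainder is really $O(|y|^5)$.
\begin{itemize}
\item Four derivatives of this remainder times the kernel are of size $|y|^{1-d-\alpha}$, which is not integrable at $0$ once $\alpha\ge1$.
\item Using three derivatives instead leaves the aliasing sum $\sum_{j\ne0}|j|^{-3}$ divergent for $d=3$.
\end{itemize}
So for $d=3$, $\alpha\in[1,2)$, your Fourier step for the remainder does not go through as written. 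The repair is to symmetrize first. The odd part $(w-1)\big(\nabla u(x)\cdot y+\sum_{|k|=3}\frac{D^ku(x)}{k!}y^k\big)$ cancels exactly in both the integral and the symmetric sum. The even part $(w(|y|)-1)\tfrac12 y^{T}D^2u(x)\,y=O(|y|^{\gamma+2})$ has $D^k$ of order $|y|^{\gamma+2-|k|}$. After multiplying by the kernel, its derivatives up to order $4$ are integrable, so it contributes $O(h^4)$.

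\textbf{Comparison with the paper's route.} Even repaired, your approach is heavier than the paper's. The paper never expands $v$. It inserts a cutoff $\eta_h$ at scale $h$ and uses only $D^kv=O(|y|^{4-|k|})$ to get $\|h^{\alpha}D^k(\eta_h\varphi)\|_{L^1}\le C$ for $|k|\le4$. This gives $|\widehat{\eta_h\varphi}(\xi)|\le Ch^{-\alpha}|\xi|^{-4}$, hence an $O(h^{4-\alpha})$ aliasing sum, plus $\int_{|y|<h}|\varphi|\,dy=O(h^{4-\alpha})$ for the cut-off piece. That argument handles the window term, the factor $e^{-\lambda|y|}$ and $d=3$ uniformly. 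It also avoids the Lyness/Epstein-zeta expansion for homogeneous singular functions, which you invoke but which the substitution $y=hz$ alone does not prove. What your route buys in exchange is a genuine asymptotic expansion with an explicit leading $c_4h^{4-\alpha}$ term. Your worry that $q=4$ is ``marginal'' for $d=3$ is unfounded: $4>3$ suffices.

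\textbf{Uniformity in $h$.} Your argument, like the paper's estimate of $J_4$, needs the constants in Lemma~\ref{L2.1}(i) to be uniform in $h$, i.e.\ a window independent of $h$. With $L=N_\rho h$, the even part of $v$ is of size $|y|^2$, not $|y|^4$, for $L\le|y|<\delta$. What you prove, once repaired, is therefore the fixed-window version of the statement, which is consistent with its hypotheses.
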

\begin{proof}
Subtracting \eqref{eq:2.14} from \eqref{eq:2.4} and invoking \eqref{eq:2.5} and \eqref{m2.7}, we derive the estimate
\begin{align}\label{eq:3.2}
&(-\Delta)_{\lambda}^{\frac{\alpha}{2}}u(x)- (-\Delta_h)_{\lambda}^{\frac{\alpha}{2}}u(x)\nonumber\\
&=C_{d,\alpha}h^d\left(S_{1,h}-S_1\right)\left(\Delta u(x)-\Delta_hu(x)\right)\nonumber\\
&\quad+C_{d,\alpha}\left(\int_{\mathbb{R}^d}\frac{v(y)}{e^{\lambda|y|}|y|^{d+\alpha}}dy-h^d\left(S_2u(x)-\sum_{j\neq 0}\frac{u(x+y_j)}{e^{\lambda|y_j|}|y_j|^{d+\alpha}}+S_{1,h}\Delta u(x)\right)\right)\nonumber\\
&:= J_3+J_4,
\end{align}
where
\begin{align}\label{r3.3}
J_3 &= C_{d,\alpha}h^d\left(S_{1,h}-S_1\right)\left(\Delta u(x)-\Delta_hu(x)\right),\\ \label{eq:3.4}
J_4&= C_{d,\alpha}\left(\int_{\mathbb{R}^d}\frac{v(y)}{e^{\lambda|y|}|y|^{d+\alpha}}dy-h^d\left(S_2u(x)-\sum_{j\neq 0}\frac{u(x+y_j)}{e^{\lambda|y_j|}|y_j|^{d+\alpha}}+S_{1,h}\Delta u(x)\right)\right).
\end{align}
It remains to estimate the error terms $J_3$ and $J_4$.
In what follows, we set $L=N_\rho h$ with a finite $N_\rho\in\mathbb Z^+$ as in \cref{L2.1}.

To estimate $J_3$ in \eqref{r3.3}, we first establish bounds for $h^dS_{1}$ and $h^dS_{1,h}$.

By \eqref{eq:2.7}, \eqref{eq:2.15}, and $0\leq w(\rho)\leq 1$, $h^dS_1$ can be estimated as
\begin{align}\label{eq:3.5}
h^dS_1 = \frac{\pi^{\frac{d}{2}}}{d\Gamma\left(\frac{d}{2}\right)}\int_{0}^{L}\frac{w(\rho)}{e^{\lambda \rho}\rho^{\alpha-1}}d\rho\leq\frac{\pi^{\frac{d}{2}}L^{2-\alpha}}{(2-\alpha)d\Gamma\left(\frac{d}{2}\right)}=
\frac{\pi^{\frac{d}{2}}{N_{\rho}}^{2-\alpha}}{(2-\alpha)d\Gamma\left(\frac{d}{2}\right)}h^{2-\alpha}.
\end{align}
From \eqref{eq:2.15}, the compact support of $w$, and $0\leq w(\rho)\leq 1$, we deduce
\begin{align}\label{eq:3.6}
h^d S_{1,h}=\frac{h^d}{2d}\sum_{j\neq 0}{\frac{w(|y_j|)}{e^{\lambda|y_j|}|y_j|^{d+\alpha-2}}}=\frac{h^d}{2d}\sum_{0<|j|\leq N_{\rho}}{\frac{w(|y_j|)}{e^{\lambda|y_j|}|y_j|^{d+\alpha-2}}}\leq \frac{h^{2-\alpha}}{2d}\sum_{0<|j|\leq N_\rho}\frac{1}{|j|^{d+\alpha-2}}.
\end{align}
Define the pairwise disjoint hierarchical index sets $F_m=\left\{j\in\mathbb Z^d:\|j\|_{\infty}=m\right\}, 1\leq m\leq N_\rho$,
where $\|j\|_\infty:=\max_{1\le l\le d}|j_l|$, and let $N_{F_m}:=|F_m|$. Then
\begin{align}\label{eq:3.7}
\{j\in\mathbb Z^d:0<|j|\le N_\rho\}\subset\bigcup_{m=1}^{N_\rho}F_m.
\end{align}
The cardinality $N_{F_m}$ is given explicitly by $$N_{F_m}=|F_m|=(2m+1)^d-(2m-1)^d=\begin{cases}\ 2,\quad & d=1,\\
\ 8m,\quad& d=2,\\
\ 24m^2+2,\quad &d =3.
\end{cases}$$
Consequently, $N_{F_m}\leq C_d m^{d-1}$, where $C_1 = 2$, $C_2 = 8$, and $C_3 = 26$.
Applying \eqref{eq:3.6}, \eqref{eq:3.7} and $N_{F_m} \leq C_d m^{d-1}$, we infer
\begin{align}\label{eq:3.8}
h^d S_{1,h}\leq\frac{h^{2-\alpha}}{2d}\sum_{m=1}^{N_\rho}\sum_{j\in F_m}\frac{1}{|j|^{d+\alpha-2}}\leq\frac{h^{2-\alpha}}{2d}\sum_{m=1}^ {N_\rho}\frac{N_{F_m}}{m^{d+\alpha-2}}\leq \frac{C_d h^{2-\alpha}}{2d}\sum_{m=1}^{N_\rho}{m^{1-\alpha}}\leq \frac{C_d}{2d}C_{\alpha}N_{\rho}^{2-\alpha}h^{2-\alpha},
\end{align}
with $C_{\alpha}$ depending only on $\alpha\in(0,2)$.

Next, invoking \eqref{eq:2.16} and the bounds in \eqref{eq:3.5} and \eqref{eq:3.8}, we conclude that
\begin{align}\label{eq:3.9}
|J_3|\leq C h^d\left(S_{1,h}+S_1\right)h^2\leq Ch^{4-\alpha}.
\end{align}

We now estimate $J_4$ in \eqref{eq:3.4}. Let $\varphi(y)=\frac{v(y)}{e^{\lambda |y|}|y|^{d+\alpha}}.$
It follows from domain symmetry that
\begin{align}\label{w3.13}
J_4=C_{d,\alpha}\left(\int_{\mathbb{R}^d}\varphi(y) dy-h^d\sum_{j\neq 0}\varphi(y_j)\right).
\end{align}
Although $\varphi$ is locally integrable for $d=1,2,3$, its fourth-order derivatives need not be integrable near the origin. To apply the aliasing formula uniformly in all three dimensions, we introduce a cutoff $\eta_h\in C^{\infty}(\mathbb{R}^d)$,
\begin{align}\label{w3.14}
\eta_h(y)=
\begin{cases}
 0, & |y|\leq\frac{h}{2},\\
 \vartheta\left(\frac{2|y|}{h}-1\right), &\frac{h}{2}<|y|<h,\\
 1, &|y|\geq h,
\end{cases}
\end{align}
where
\begin{align}\label{w3.15}
\vartheta(\upsilon)=\frac{\rho_0(\upsilon)}{\rho_0(\upsilon)+\rho_0(1-\upsilon)},\quad \rho_0(\upsilon)=
\begin{cases}
0, &\upsilon\leq 0,\\
e^{-\frac{1}{\upsilon}},  &\upsilon>0.
\end{cases}
\end{align}
Setting $\phi_h(y)=\eta_h(y)\varphi(y)$ yields $\phi_h(0)=0$, as required for applying the aliasing formula to $\phi_h(y)$.

By adding $\phi_h(y)$ and subtracting it from $\varphi(y)$, we obtain
\begin{align}\label{w3.16}
\varphi(y)=\phi_h(y)+(1-\eta_h(y))\varphi(y).
\end{align}
Inserting \eqref{w3.16} into \eqref{w3.13}, and using $\phi_h(0)=0$ along with \eqref{w3.14}, we arrive at
\begin{align}\label{w3.17}
J_4&=C_{d,\alpha}\left(\int_{\mathbb{R}^d}\left(\phi_h(y)+(1-\eta_h(y))\varphi(y)\right)dy-h^d\sum_{j\neq 0}\left(\phi_h(y_j)+(1-\eta_h(y_j))\varphi(y_j)\right)\right) \nonumber\\
&=C_{d,\alpha}\left(\int_{\mathbb{R}^d}\phi_h(y)dy-h^d\sum_{j\in\mathbb{Z}^d}\phi_h(y_j)+\int_{|y|< h}(1-\eta_h(y))\varphi(y)dy\right) \nonumber\\
&:=I_4+I_5,
\end{align}
with
\begin{equation}\label{w03.18}
\begin{aligned}
I_4&=C_{d,\alpha}\left(\int_{\mathbb{R}^d}\phi_h(y)dy
-h^d\sum_{j\in\mathbb{Z}^d}\phi_h(y_j)\right),\\
I_5&=C_{d,\alpha}\int_{|y|< h}(1-\eta_h(y))\varphi(y)dy.
\end{aligned}
\end{equation}
The decomposition $J_4=I_4+I_5$ reduces the estimation of $J_4$ to that of $I_4$ and $I_5$.

For $I_4$ in \eqref{w03.18}, we require a uniform bound for the $L^1$-norm  of $h^{\alpha}D^k\phi_h(y)$ for $|k|\leq \varrho=4$, which enables us to derive the desired error estimate via the aliasing formula.
To this end, using the partition $\mathbb R^d=\left\{|y|\le \frac {h } {2}\right\}\cup\left\{\frac {h}{2}<|y|<h\right\}\cup \left\{|y|\ge h\right\},$
the $L^1$-norm  of $h^{\alpha}D^k\phi_h(y)$ decomposes as
\begin{align}\label{m3.19}
\|h^{\alpha}D^k\phi_h\|_{L^1(\mathbb{R}^d)}&=\int_{|y|\leq\frac{h}{2}}h^{\alpha}|D^k\phi_h(y)|dy+\int_{\frac{h}{2}<|y|< h}h^{\alpha}|D^k\phi_h(y)|dy+\int_{|y|\geq h}h^{\alpha}|D^k\phi_h(y)|dy.
\end{align}

In the first region $|y|\le \frac{h}{2}$, \eqref{w3.14} gives $\eta_h(y)=0$, hence $h^{\alpha}D^k\phi_h(y)=0$.

In the second region $\frac {h}{2}<|y|<h$, the Leibniz rule yields
\begin{align}\label{w3.19}
D^k\phi_h(y)=\sum_{r\le k}\binom{k}{r} D^r\eta_h(y)\, D^{k-r}\varphi(y),
\end{align}
where $0\le r_l\le k_l$ for $l=1,\ldots,d$.
The chain rule gives
\begin{align}\label{w3.20}
|D^r\eta_h(y)|\le C h^{-|r|}.
\end{align}
Employing \eqref{eq:2.5} and the property (i) of \cref{L2.1} in the definition of $\varphi(y)$ provides the estimate
\begin{align}\label{w3.21}
|D^{k-r}\varphi(y)|\le C |y|^{\tau-d-\alpha-|k-r|}\le C h^{\tau-|k|+|r|-d-\alpha}.
\end{align}
Applying \eqref{w3.20} and \eqref{w3.21} to \eqref{w3.19} and subsequently taking the $L^1$-norm of the resulting expression, we deduce that
\begin{align}\label{m3.22}
\int_{\frac{h}{2}<|y|<h}h^{\alpha}|D^k\phi_h(y)|dy\leq C h^{\tau-|k|}.
\end{align}

In the third region $|y|\geq h$ with $0<h<\delta$, for $|k|\leq 4$, combining property (i) of \cref{L2.1}, the compact supports and regularities of $u$ and $w$, \eqref{w3.14}, and \eqref{w3.16}, we have the following estimate:
\begin{align}
\int_{|y|\geq h}h^{\alpha}|D^k\phi_h(y)|dy&=\int_{h\leq|y|< \delta}h^{\alpha}|D^k\varphi(y)|dy+\int_{\delta\leq|y|\leq R_0}h^{\alpha}|D^k\varphi(y)|dy+\int_{|y|> R_0}h^{\alpha}|D^k\varphi(y)|dy\leq C.
\end{align}

Gathering the above region-wise estimates on  $\mathbb{R}^d$ in \eqref{m3.19} delivers the following estimate for $|k|\leq4$:
\begin{align}\label{w3.24}
\|h^{\alpha}D^k\phi_h\|_{L^1(\mathbb{R}^d)}\leq C.
\end{align}
Consequently, $h^{\alpha}D^k\phi_h\in L^1(\mathbb R^d)$.

In view of $\phi_h(0)=0$, \eqref{w03.18} and \eqref{w3.24}, an application of \cref{l3.1} to $\phi_h(y)$ gives
\begin{align}\label{w3.25}
I_4 = -C_{d,\alpha}\sum_{j\neq 0}\widehat{\phi_h}\left(\frac{2\pi j}{h}\right).
\end{align}
Accordingly, the estimate for $I_4$ relies on the decay rate of $\widehat{\phi_h}(\xi)$.

Choose $i$ such that $|\xi_i|=\max_{1\le l\le d}|\xi_l|$; then $\xi_i\neq0$ and $|\xi_i|\ge |\xi|/\sqrt d$. Taking $k = 4e_i$, the Fourier transform satisfies $\widehat{\partial_{y_i}^4\phi_h}(\xi) = (i\xi_i)^4 \widehat{\phi_h}(\xi)$. Combining this with \eqref{w3.24} produces the estimate
\begin{align}\label{w3.26}
|\widehat{\phi_h}(\xi)|= |\xi_i|^{-4}\left|\int_{\mathbb{R}^d}\partial_{y_i}^4\phi_h(y)e^{-i\xi\cdot y}dy\right|\leq|\xi_i|^{-4}h^{-\alpha} \|h^{\alpha}\partial_{y_i}^4\phi_h\|_{L^1(\mathbb{R}^d)} \le C |\xi|^{-4} h^{-\alpha}.
\end{align}
Using \eqref{w3.26} in \eqref{w3.25} yields the key estimate
\begin{align}\label{w3.27}
|I_4|\leq C h^{4-\alpha}\sum_{j\neq0}\frac{1}{|2\pi j|^4}\leq C h^{4-\alpha}.
\end{align}

For $I_5$ in \eqref{w03.18}, \eqref{w3.14} and \eqref{w3.15} imply $0\leq\eta_h(y)\leq 1$. By virtue of this fact and property (i) of \cref{L2.1}, we have
\begin{align}\label{w3.28}
|I_5|\leq \int_{|y|< h}|\varphi(y)|dy\leq C\int_{0}^h\rho^{\tau-1-\alpha}d\rho\leq C h^{4-\alpha}.
\end{align}
Combining \eqref{eq:3.2}, \eqref{eq:3.9}, \eqref{w3.17}, \eqref{w3.27}, and \eqref{w3.28} completes the proof of \eqref{eq:3.1}.
\end{proof}
The validity of the preceding truncation error analysis hinges upon a compactly supported window function $w \in C^{\gamma}([0,\infty))$ obeying $$w(\rho) = 1 + O(\rho^{\gamma}), \quad \rho\to 0, \quad 0\leq w(\rho) \leq1.$$
The explicit construction of such a function is provided in \cref{r3.1}, whereas the verification of the conditions required by the above theorem is deferred to \cref{r4.1} in Section 4.

\begin{remark}\label{r3.1}(Window function). For the convergence rate in \cref{T3.1} and simplicity of the window function construction, it suffices to set $\gamma = 4$. Imposing the nine conditions $w(0)=1,\ w^{(\ell)}(0)=0\ (\ell=1,2,3),\ w^{(\ell)}(L)=0\ (\ell=0,1,2,3,4)$ uniquely determines the explicit polynomial window function
\begin{align}\label{eq:3.19}
w(\rho)=\begin{cases}
1-70\left(\frac{\rho}{L}\right)^4+224\left(\frac{\rho}{L}\right)^5-280\left(\frac{\rho}{L}\right)^6+160\left(\frac{\rho}{L}\right)^7-35\left(\frac{\rho}{L}\right)^8,\quad &0\leq\rho<L,\\
0,\quad&\rho\geq L,
\end{cases}
\end{align}
where $L:=N_\rho h=20h$. While this construction is not unique, choosing $\gamma=4$ suffices to achieve the desired convergence rate. Any $\gamma>4$ would only increase the polynomial degree without improving the convergence rate itself, since that rate is partially governed by the constant term of $w$ and the second-order central differences in \cref{T3.1}.
\end{remark}
Using the window function from \cref{r3.1}, \cref{alg:1} implements the WIRFD scheme for the TFL.
\begin{algorithm}[H]
  \caption{WIRFD scheme for the multidimensional TFL}
  \label{alg:1}
\begin{itemize}
\item{\bf{Input: }}$d\in\{1,2,3\}$, $\lambda\geq 0$, $\alpha\in(0,2)$, $h>0$, $U\in \mathbb{R}^{N}$ on $\Omega_h$.\quad {\bf{Output: }}$(-\Delta_h)_{\lambda}^{\frac{\alpha}{2}}U\in\mathbb{R}^N $ on $\Omega_h$.
\end{itemize}
\begin{algorithmic}
\STATE {\textbf{Step 1: }}Construct the grid $\Omega_h$ as defined in \eqref{eq:a2.13}.
\STATE {\textbf{Step 2: }}Compute $S_1$ from \eqref{eq:2.7} and \eqref{eq:2.15}, $\displaystyle S_1=\frac{\pi^{\frac{d}{2}}}{dh^d\Gamma\left(\frac{d}{2}\right)}\int_{0}^{L}\frac{w(\rho)}{e^{\lambda \rho}\rho^{\alpha-1}}d\rho$, where the integral is evaluated using the gamma and lower incomplete gamma functions (gammainc) in MATLAB.
\STATE {\textbf{Step 3: }}Evaluate $S_{1,h}$ based on \eqref{eq:2.15} and \eqref{eq:3.19} with $N_{\rho}=20$,
$\displaystyle S_{1,h}=\frac{1}{2d}\sum_{0<|j|\leq N_{\rho}}{\frac{w(|jh|)}{e^{\lambda|jh|}|jh|^{d+\alpha-2}}}$.
\STATE {\textbf{Step 4: }}For $d=1$, we compute $S_2$ using MATLAB's function \texttt{polylog}. For $d=2$ and $d=3$, we approximate $S_2$ in \eqref{eq:2.15} by
$\displaystyle S_2 \approx \sum_{\substack{ |j_l| \leq K\\j \neq 0}} \frac{1}{e^{\lambda |jh|} |jh|^{d+\alpha}},$
where $j=(j_1,\ldots,j_d)$, $|j_l| \leq K$ for $l=1,\ldots,d$, and $K = \left\lfloor \frac{R}{h}\right\rfloor$. The truncation radius $R$ is chosen so that the neglected tail is below the prescribed tolerance; $R=20$ is used in the reported experiments. The sum is evaluated via a parallel \texttt{parfor} loop in MATLAB.
\STATE {\textbf{Step 5: }}Evaluate the first rows of $D$ and $T$ via \eqref{eq:2.17}, then assemble the first row of $A=C_{d,\alpha}h^d(D+T)$.
\STATE {\textbf{Step 6: }}Implement $(-\Delta_h)^{\alpha/2}_\lambda U=A U$ by the fast Fourier transform (FFT), following Ref.~\cite{CDH2025}.
\end{algorithmic}
\end{algorithm}



\section{Error analysis of the WIRFD approximation for the \texorpdfstring{$d=1$}{d=1} TFL equation}
This section applies the WIRFD scheme \eqref{eq:2.14} to the TFL equation \eqref{eq:1} and derives $l^2$- and $l^\infty$-norm error estimates for the numerical approximation in the case $d=1$. Specifically, we seek $\{u_j^h\}$ satisfying
\begin{alignat}{2}\label{eq:4.1}
(-\Delta_h)_{\lambda}^{\frac{\alpha}{2}}u_j^h&=f(x_j),\quad &&x_j\in\Omega_h,\quad \alpha\in(0,2),\nonumber\\
u_j^h&=0,\quad &&x_j\in h\mathbb{Z}\setminus\Omega_h,
\end{alignat}
with $u_j^h=u^h(x_j)$ for $x_j\in h\mathbb{Z}$.
Substituting \eqref{eq:02.16} into \eqref{eq:4.1} yields the discrete linear system $AU_h=f$, where $U_h=\{u_j^h\}_{x_j\in \Omega_h}\in\mathbb{R}^N$ and $f=\{f_j\}_{x_j\in \Omega_h}\in \mathbb{R}^N$ denote the numerical solution and the source grid function, respectively.

The following lemma establishes the strict diagonal dominance of the discrete TFL matrix, a property essential for subsequent stability and error analysis.

\begin{lemma}(Strict diagonal dominance).\label{l4.1}
Under the assumptions that $d=1$, $\lambda\geq0$, $\alpha\in[1,2)$, $0\leq w(\rho)\leq 1$, and $w'(\rho)<0$ for $0<\rho< L=N_{\rho}h$, the matrix $A$ from \eqref{eq:02.16} has the following properties:
\begin{align}
a_{i,i}>0,\quad a_{i,j}<0, \quad i\neq j, \quad a_{i,i}+\sum_{\substack{j=1 \\ j \neq i}}^{N}a_{i,j}>c_0>0,
\end{align}
where $c_0:=\frac{2C_{1,\alpha}(1-2^{-\alpha})}{\alpha e^{2\lambda L_{\Omega}}L_{\Omega}^{\alpha}}$ and $L_{\Omega}:=(N+1)h$. Hence $A$ is symmetric positive definite and its minimum eigenvalue satisfies $\lambda_{\min}(A)>c_0$.
\end{lemma}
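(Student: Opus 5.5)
The plan is to read off the signs of the entries of $A=C_{1,\alpha}h(D+T)$ from \eqref{eq:2.17} with $d=1$, and then bound the row sums from below. Everything hinges on the sign of the scalar $c:=S_{1,h}-S_1$. For $d=1$, \eqref{eq:2.15} gives
\begin{align*}
S_{1,h}=\sum_{j=1}^{N_\rho} g(jh),\qquad S_1=\frac1h\int_0^L g(\rho)\,d\rho,\qquad g(\rho):=w(\rho)e^{-\lambda\rho}\rho^{1-\alpha}.
\end{align*}
On $(0,L)$ each factor of $g$ is nonnegative and nonincreasing: $\rho^{1-\alpha}$ because $\alpha\ge1$, $e^{-\lambda\rho}$ because $\lambda\ge0$, and $w$ because $w'<0$. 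Since $w>0$ there, $g$ is in fact strictly decreasing.

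\textbf{Step 1: the sign of $c$.} Because $g$ is strictly decreasing, the right-endpoint Riemann sum underestimates the integral on every cell:
\begin{align*}
h\,g(jh)<\int_{(j-1)h}^{jh}g\,d\rho .
\end{align*}
For $\alpha>1$ the integral near $0$ is still finite, since $2-\alpha>0$. Summing over $j$ gives $hS_{1,h}<hS_1$, so $c<0$. Hence the off-diagonal entries of $D$ equal $c/h^2<0$, and its diagonal entries equal $-2c/h^2>0$. The off-diagonal entries of $T$ are $-e^{-\lambda|i-j|h}|(i-j)h|^{-1-\alpha}<0$, which gives $a_{i,j}<0$ for $i\ne j$. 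The diagonal of $T$ is $S_2>0$, which gives $a_{i,i}>0$.

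\textbf{Step 2: row sums of $D$ and $T$.} For $D$, each row sum is $-2c/h^2+m_i c/h^2$, where $m_i\in\{1,2\}$ is the number of grid neighbours. This is $\ge0$ because $c<0$, so $D$ only helps. For $T$, subtracting the off-diagonal terms from $S_2$ leaves exactly the kernel summed over the lattice offsets $k$ that leave the grid, namely $k\ge N+1-i$ and $k\le -i$:
\begin{align*}
\sum_{k\ge N+1-i}f(kh)+\sum_{k\ge i}f(kh),\qquad f(\rho)=e^{-\lambda\rho}\rho^{-1-\alpha}.
\end{align*}
Both starting indices $m\in\{i,\,N+1-i\}$ satisfy $mh\le Nh<L_\Omega$. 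Since $f$ is strictly decreasing, a left-endpoint comparison and then truncation at $2L_\Omega$ give
\begin{align*}
h\sum_{k\ge m}f(kh)>\int_{mh}^{2L_\Omega}f\,d\rho\ \ge\ e^{-2\lambda L_\Omega}\,\frac{(mh)^{-\alpha}-(2L_\Omega)^{-\alpha}}{\alpha}\ \ge\ \frac{e^{-2\lambda L_\Omega}(1-2^{-\alpha})}{\alpha L_\Omega^{\alpha}}.
\end{align*}
Multiplying by $C_{1,\alpha}$ and adding the two sides gives row sum $>c_0$.

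\textbf{Step 3: conclusion.} $A$ is symmetric by its Toeplitz form. By Gershgorin, each eigenvalue satisfies
\begin{align*}
\lambda\ \ge\ \min_i\Bigl(a_{i,i}-\sum_{j\ne i}|a_{i,j}|\Bigr)=\min_i\sum_j a_{i,j}>c_0,
\end{align*}
so $A$ is symmetric positive definite with $\lambda_{\min}(A)>c_0$.

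The main obstacle is Step 1: the sign $S_{1,h}<S_1$ is where $\alpha\ge1$ and the monotonicity of $w$ are essential, and the point $\rho=0$ needs care when $\alpha>1$ because $g$ is unbounded there, although it remains integrable. The tail estimate in Step 2 is routine once the excluded offsets are identified correctly.
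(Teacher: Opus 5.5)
Your proof is correct and follows the paper's route. Both arguments get the sign $S_{1,h}-S_1<0$ from a right-endpoint comparison for the strictly decreasing $w(\rho)e^{-\lambda\rho}\rho^{1-\alpha}$. Both then use that the $D$ row sums are nonnegative, apply a left-endpoint integral comparison to the kernel tail truncated at $2L_\Omega$, and finish with Gershgorin.

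The only difference is cosmetic. You compute the exact row-dependent tails $\sum_{k\ge i}+\sum_{k\ge N+1-i}$, whereas the paper bounds every row uniformly by $2\sum_{k\ge N}e^{-\lambda kh}(kh)^{-1-\alpha}$. Both give the same constant $c_0$.
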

\begin{proof}
(i) Positivity of the diagonal entries.  Combining \eqref{eq:02.16} and \eqref{eq:2.17} yields
\begin{align}\label{eq:4.4}
a_{i,i}=C_{d,\alpha} h^d\left(S_2-\dfrac{2d(S_{1,h}-S_1)}{h^2}\right).
\end{align}
Recalling \eqref{eq:2.7} and \eqref{eq:2.15} with $d=1$, we deduce
\begin{align}\label{eq:4.5}
S_{1,h}-S_1&=\frac{1}{2}\sum_{j\neq 0}{\frac{w(|y_j|)}{e^{\lambda|y_j|}|y_j|^{\alpha-1}}}-\frac{1}{2h}\int_{[-L,L]}{\frac{w(|y|)}{e^{\lambda|y|}|y|^{\alpha-1}}}dy\nonumber\\
& = \sum_{j=1}^{N_{\rho}}{\frac{w(|y_j|)}{e^{\lambda|y_j|}|y_j|^{\alpha-1}}}-\frac{1}{h}\int_{0}^{L}\frac{w(\rho)}{e^{\lambda \rho}\rho^{\alpha-1}}d\rho\nonumber\\
&=\sum_{j=1}^{N_\rho}{\frac{w(|y_j|)}{e^{\lambda|y_j|}|y_j|^{\alpha-1}}}-\frac{1}{h}\sum_{j=1}^{N_\rho}\int_{y_{j-1}}^{y_{j}}\frac{w(\rho)}{e^{\lambda \rho}\rho^{\alpha-1}}d\rho\nonumber\\
&=\sum_{j=1}^{N_\rho}F_{\alpha,\lambda}(y_j)-\frac{1}{h}\sum_{j=1}^{N_\rho}\int_{y_{j-1}}^{y_{j}}F_{\alpha,\lambda}(\rho)d\rho,
\end{align}
where $F_{\alpha,\lambda}(\rho):=\frac{w(\rho)}{e^{\lambda \rho}\rho^{\alpha-1}}$.
Differentiating $F_{\alpha,\lambda}(\rho)$ with respect to $\rho$ gives
\begin{align}\label{eq:4.6}
F_{\alpha,\lambda}'(\rho)&:=w(\rho)e^{-\lambda \rho}(1-\alpha)\rho^{-\alpha}+(w'(\rho)e^{-\lambda \rho}-\lambda w(\rho)e^{-\lambda \rho})\rho^{1-\alpha}\nonumber\\
&=((1-\alpha-\lambda \rho)w(\rho)+w'(\rho)\rho)e^{-\lambda \rho}\rho^{-\alpha}.
\end{align}

Under $0\leq w(\rho)\leq 1$ and $w'(\rho)<0$, \eqref{eq:4.6} gives $F_{\alpha,\lambda}'(\rho)<0$ on $(0,L)$ for each $\alpha\in[1,2)$. The strict decrease of $F_{\alpha,\lambda}(\rho)$ on $(0,L)$ implies, for each interval $[y_{j-1},y_j]$ with $j=1,\ldots,N_\rho$,
\begin{align}\label{eq:4.7}
F_{\alpha,\lambda}(y_j)<\frac{1}{h}\int_{y_{j-1}}^{y_j}F_{\alpha,\lambda}(\rho)d\rho.
\end{align}
Substituting \eqref{eq:4.7} into \eqref{eq:4.5} yields $S_{1,h}-S_1<0$. Coupled with $S_2>0$ from \eqref{eq:2.15} and \eqref{eq:4.4}, these two estimates ensure $a_{i,i}>0$.

(ii) Negativity of off-diagonal entries. Using \eqref{eq:02.16}, \eqref{eq:2.17}, and $S_{1,h}-S_1<0$, we infer  $a_{i,j}=C_{1,\alpha}h(d_{i,j}+t_{i,j})<0$ for $i\neq j$.

(iii) Strictly diagonally dominant. Upon summing the diagonal and off-diagonal entries of \eqref{eq:02.16} while invoking \eqref{eq:2.15} and \eqref{eq:2.17}, we obtain the following inequality
\begin{align}\label{eq:4.8}
a_{i,i}+\sum_{\substack{j=1 \\ j \neq i}}^{N}a_{i,j}&\geq C_{1,\alpha}h\left( S_2-2\sum_{j=1}^{N-1}\frac{1}{e^{\lambda|y_j|}|y_j|^{1+\alpha}}\right)\geq2C_{1,\alpha}h\sum_{j=N}^{\infty}\frac{1}{e^{\lambda y_{j}}y_{j}^{1+\alpha}}=2C_{1,\alpha}h\sum_{j=N}^{\infty}F(y_j),
\end{align}
where $F(y)=\dfrac{1}{e^{\lambda y}y^{1+\alpha}}$.
Since $F(y)$ is strictly decreasing on $[y_N,\infty)$, for each subinterval  $[y_j,y_{j+1}]$ with $j\geq N$, the following inequality holds:
\begin{align}\label{eq:4.9}
h F(y_j) > \int_{y_j}^{y_{j+1}}F(y)dy.
\end{align}
Plugging \eqref{eq:4.9} into \eqref{eq:4.8} leads to
\begin{align}
a_{i,i}+\sum_{\substack{j=1 \\ j \neq i}}^{N}a_{i,j}>2C_{1,\alpha}\int_{y_N}^{\infty}F(y)dy> 2C_{1,\alpha}\int_{L_{\Omega}}^{2L_{\Omega}}F(y)dy\geq \frac{2C_{1,\alpha}(1-2^{-\alpha})}{\alpha e^{2\lambda L_{\Omega}}L_{\Omega}^{\alpha}},
\end{align}
where $L_{\Omega}=(N+1)h$.

By Gershgorin's theorem (\cite{A1996}, Theorem 4.4) and the matrix properties (i)-(iii), the minimum eigenvalue $\lambda_{\min}(A)$ has the following lower bound:
\begin{align}
\lambda_{\min}(A)\geq\min_{1\leq i\leq N}\left(a_{i,i}-\sum_{\substack{j=1 \\ j \neq i}}^{N}|a_{i,j}|\right)=\min_{1\leq i\leq N}\left(a_{i,i}+\sum_{\substack{j=1 \\ j \neq i}}^{N}a_{i,j}\right)>\frac{2C_{1,\alpha}(1-2^{-\alpha})}{\alpha e^{2\lambda L_{\Omega}}L_{\Omega}^{\alpha}}>0.
\end{align} Therefore, $A$ is symmetric positive definite, and the WIRFD approximation \eqref{eq:4.1} possesses a unique solution.
\end{proof}
\begin{remark}\label{r4.1}
The window function in \eqref{eq:3.19} has $w'(\rho)=-\frac{280\rho^3}{L^4}\left(1-\frac{\rho}{L}\right)^4<0$ for $\rho\in(0,L)$ with $w(0)=1$ and $w(L)=0$. Hence the preceding results apply.
For $d=2,3$, extending \cref{l4.1} requires a proof of $S_{1,h}-S_1<0$ for a Cartesian lattice sum relative to a radial integral. The lack of a natural radial ordering of lattice points makes the one-dimensional monotonicity argument unavailable, particularly for $\alpha\in(0,1)$. We therefore restrict the rigorous stability and convergence analysis to $d=1$. Figure~\ref{fig:0} nevertheless provides numerical evidence that $S_{1,h}-S_1<0$ in $d=2$ for $\lambda=1.0$ and $\alpha=0.4,0.7,1.0,1.3,1.6,1.9$.
\end{remark}
\begin{figure}[H]
\centering
\includegraphics[width=0.49\linewidth]{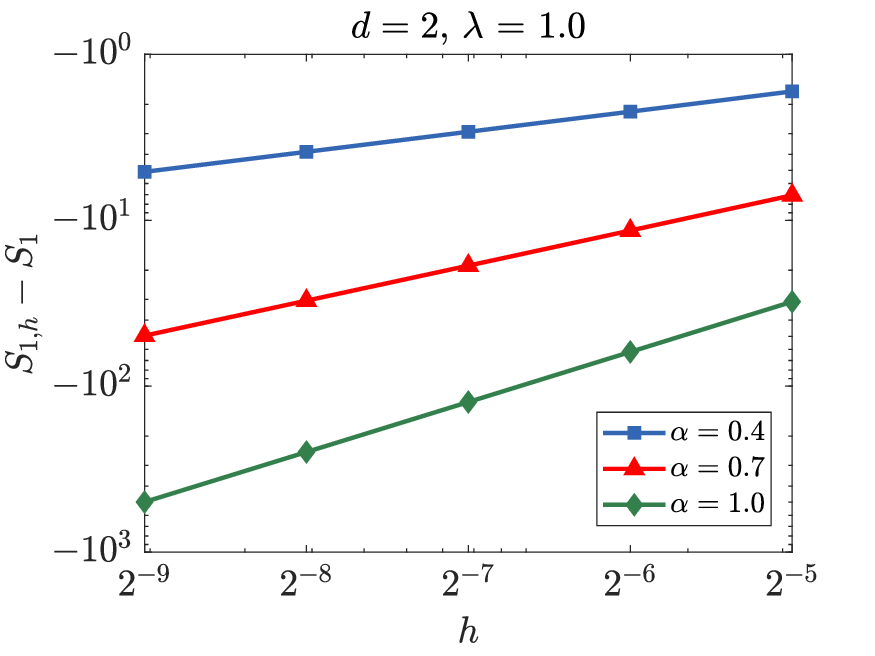}
\includegraphics[width=0.49\linewidth]{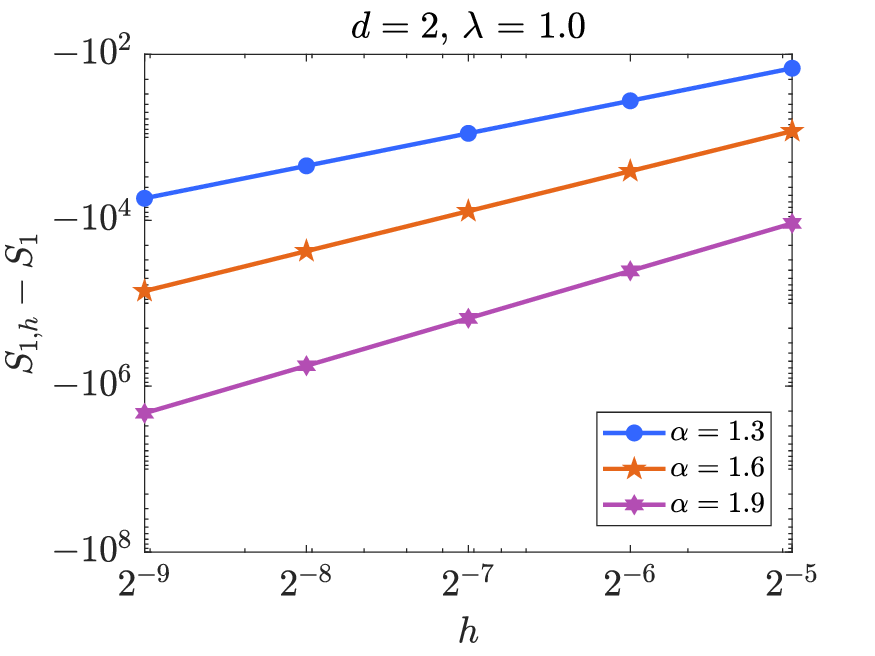}
\caption{Strict inequality $S_{1,h}-S_1<0$ ($\lambda =1.0$, $\alpha = 0.4$, $0.7$, $1.0$, $1.3$, $1.6$, $1.9$).}
\label{fig:0}
\end{figure}

We now analyze the stability of the numerical method \eqref{eq:4.1} for $d=1$ and $\alpha\in[1,2)$. The $l^2$ inner product and its associated norm are defined as
\begin{align}
(U,V)_{\Omega_h}=h\sum_{x_j\in\Omega_h}u_j\bar{v}_j, \qquad \|U\|_{l^2(\Omega_h)}=\sqrt{(U,U)_{\Omega_h}},
\end{align}
where $U=\{u_j\}_{x_j\in\Omega_h}$ and $V=\{v_j\}_{x_j\in\Omega_h}$ are grid functions, and $\bar{v}_j$ is the complex conjugate of $v_j$.

\begin{theorem}\label{d4.1} (Stability). Under the assumptions of \cref{l4.1} and $f\in l^{\infty}(\Omega_h)$, the numerical solution $U_h$ of \eqref{eq:4.1} satisfies the bounds
\begin{align}\label{eq:4.11}
\|U_h\|_{l^{2}(\Omega_h)}\leq \frac{1}{c_0}\|f\|_{l^2(\Omega_h)}, \qquad \|U_h\|_{l^{\infty}(\Omega_h)}\leq \frac{1}{c_0} \|f\|_{l^\infty(\Omega_h)},
\end{align}
with $c_0>0$ from \cref{l4.1}.
\end{theorem}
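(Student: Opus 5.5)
The plan is to read both bounds directly off the structure of $A$ established in \cref{l4.1}: the $l^2$ bound from symmetric positive definiteness with $\lambda_{\min}(A)>c_0$, and the $l^\infty$ bound from strict diagonal dominance with negative off-diagonal entries. By \cref{l4.1}, $A$ is invertible, so \eqref{eq:4.1} has the unique solution $U_h=A^{-1}f$, where $f$ is the vector of values $f(x_j)$ on $\Omega_h$.

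For the $l^2$ estimate I will use an energy argument. Take the discrete inner product of $AU_h=f$ with $U_h$. Since $A$ is real symmetric with $\lambda_{\min}(A)>c_0$, the Rayleigh quotient gives
\begin{align*}
c_0\|U_h\|_{l^2(\Omega_h)}^2\le h\,U_h^{T}AU_h=(f,U_h)_{\Omega_h}\le \|f\|_{l^2(\Omega_h)}\|U_h\|_{l^2(\Omega_h)},
\end{align*}
where the last step is Cauchy--Schwarz. Dividing by $\|U_h\|_{l^2(\Omega_h)}$, or noting that the case $U_h=0$ is trivial, yields the first inequality in \eqref{eq:4.11}. The weight $h$ appears on both sides, so it causes no difficulty. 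Equivalently, one can write $\|A^{-1}\|_2=1/\lambda_{\min}(A)<1/c_0$.

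For the $l^\infty$ estimate I will use a discrete maximum-principle argument. Choose an index $i$ with $|u_i^h|=\|U_h\|_{l^\infty(\Omega_h)}$. Row $i$ of the system reads $a_{i,i}u_i^h=f_i-\sum_{j\ne i}a_{i,j}u_j^h$. Since $a_{i,j}<0$ for $j\ne i$, each term satisfies $|a_{i,j}u_j^h|\le |a_{i,j}|\,|u_i^h|$. This gives
\begin{align*}
|f_i|\ge a_{i,i}|u_i^h|-\sum_{j\ne i}|a_{i,j}|\,|u_j^h|\ge \Bigl(a_{i,i}+\sum_{j\ne i}a_{i,j}\Bigr)|u_i^h|> c_0|u_i^h|,
\end{align*}
and therefore $\|U_h\|_{l^\infty(\Omega_h)}\le c_0^{-1}\|f\|_{l^\infty(\Omega_h)}$. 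This is the standard bound $\|A^{-1}\|_\infty\le 1/\min_i(a_{i,i}-\sum_{j\ne i}|a_{i,j}|)$ for strictly diagonally dominant matrices.

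Neither step poses a real obstacle once \cref{l4.1} is available; the substance lies in the sign pattern and the row-sum bound proved there. The one point to watch is that $c_0$ depends on $L_\Omega=(N+1)h=2$, which is fixed for $\Omega=(-1,1)$. Hence $c_0$ is independent of $h$, and the bounds constitute genuine uniform stability.
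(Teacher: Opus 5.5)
Your proposal is correct and follows essentially the same route as the paper. The $l^2$ bound comes from testing $AU_h=f$ against $U_h$ and combining $\lambda_{\min}(A)>c_0$ with Cauchy--Schwarz; the $l^\infty$ bound comes from the row at a maximizing index, using $a_{i,j}<0$ and the row-sum bound of \cref{l4.1}, and your triangle-inequality version is a more direct form of the paper's sign argument with $(f_{i_0}-c_0u_{i_0}^h)u_{i_0}^h$. One cosmetic fix: the final inequality should read $\ge c_0|u_i^h|$ rather than $>$, so that the trivial case $U_h=0$ is covered.
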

\begin{proof}
We first establish a uniform bound on the $l^2$-norm of $U_h$.

Testing \eqref{eq:4.1} against $U_h$ in the $l^2$ inner product, and applying the symmetric positive definiteness from \cref{l4.1} together with the Cauchy-Schwarz inequality, we deduce
\begin{align}\label{4.12}
\lambda_{\min}(A)\|U_h\|_{l^2(\Omega_h)}^2\leq(AU_h,U_h)_{\Omega_h}\leq \|f\|_{l^2(\Omega_h)}\|U_h\|_{l^2(\Omega_h)}.
\end{align}
Incorporating the minimum eigenvalue lower bound from \cref{l4.1} into \eqref{4.12} leads to the following estimate:
\begin{align}
\|U_h\|_{l^2(\Omega_h)}\leq \frac{1}{\lambda_{\min}(A)}\|f\|_{l^2(\Omega_h)}\leq\frac{1}{c_0}\|f\|_{l^2(\Omega_h)}.
\end{align}

Next, we derive a uniform bound on the $l^\infty$-norm of $U_h$.

Let $\|U_h\|_{l^\infty(\Omega_h)}=\left|u_{i_0}^h\right|$ for some $1\leq i_0\leq N$. At this index, we have the equality
\begin{align}\label{eq:4.14}
\left(f_{i_0}-c_0u_{i_0}^h\right)u_{i_0}^h&=\left(\sum_{j=1}^{N}a_{i_0,j}u_{j}^h-c_0u_{i_0}^h\right)u_{i_0}^h.
\end{align}
Since $\|U_h\|_{l^\infty(\Omega)}=\left|u_{i_0}^h\right|$,  we obtain $u_j^hu_{i_0}^h\leq\left|u_j^hu_{i_0}^h\right|\leq \left(u_{i_0}^h\right)^2$. Together with $a_{i_0,j} < 0$ for $j\neq i_0$ by \cref{l4.1}, this implies
\begin{align}\label{eq:04.15}
a_{i_0,j}u_j^hu_{i_0}^h\geq a_{i_0,j}\left(u_{i_0}^h\right)^2.
\end{align}

With the aid of \cref{l4.1} and \eqref{eq:04.15}, we deduce from \eqref{eq:4.14} that
\begin{align}\label{eq:4.15}
\left(f_{i_0}-c_0u_{i_0}^h\right)u_{i_0}^h\geq\left(\sum_{j=1}^{N}a_{i_0,j}-c_0\right)\left(u_{i_0}^h\right)^2>0.
\end{align}
The positivity of $c_0$ and \eqref{eq:4.15} imply $\left(f_{i_0}-c_0u_{i_0}^h\right)c_0u_{i_0}^h>0$, indicating the two factors share the same sign. Therefore, $c_0\left|u_{i_0}^h\right|\leq\left|f_{i_0}-c_0u_{i_0}^h+c_0u_{i_0}^h\right|=\left|f_{i_0}\right|$, which proves \eqref{eq:4.11}.
\end{proof}

The error estimates build upon the preceding stability analysis.
\begin{theorem}\label{D4.2} (Error estimates).
Under the assumptions of \cref{T3.1} and \cref{d4.1}, the exact grid solution $U$ of \eqref{eq:1} and the numerical solution $U_h$ of \eqref{eq:4.1} satisfy the following error bounds for $d=1$, $\alpha\in[1,2)$, and sufficiently small $h>0$:
\begin{align}\label{eq:4.16}
\|U-U_h\|_{l^2(\Omega_h)}\leq Ch^{4-\alpha},\qquad \|U-U_h\|_{l^\infty(\Omega_h)}\leq Ch^{4-\alpha},
\end{align}
with $C$ independent of $h$.
\end{theorem}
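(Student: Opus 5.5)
The plan is the standard consistency-plus-stability argument, combining \cref{T3.1} with \cref{d4.1}. Define the error grid function $E=U-U_h$, with $e_j=u(x_j)-u_j^h$ for $x_j\in\Omega_h$. Since $u$ satisfies the homogeneous exterior condition, $e_j=0$ for $x_j\in h\mathbb{Z}\setminus\Omega_h$. So $E$ is an admissible grid function for the scheme \eqref{eq:4.1}, and the action of $(-\Delta_h)_{\lambda}^{\frac{\alpha}{2}}$ on it is exactly $AE$ via \eqref{eq:02.16}.

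\textbf{Error equation.} First, evaluate the exact equation \eqref{eq:1} at the nodes, which gives $(-\Delta)_{\lambda}^{\frac{\alpha}{2}}u(x_j)=f(x_j)$. Next, subtract the discrete equation $AU_h=f$. This yields the error equation
\begin{align*}
AE = (-\Delta_h)_{\lambda}^{\frac{\alpha}{2}}U - f = (-\Delta_h)_{\lambda}^{\frac{\alpha}{2}}U - (-\Delta)_{\lambda}^{\frac{\alpha}{2}}U =: -\mathcal{T}_h,
\end{align*}
where $\mathcal{T}_h$ is precisely the local truncation error of \cref{T3.1}. One subtlety must be checked here: the discrete operator applied to $U$ uses values $u(x+y_j)$ at all lattice points, including those outside $\Omega$. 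Because $u$ vanishes on $\Omega^c$, these coincide with the zero-extended grid function. The exterior sum in \eqref{eq:2.14} is therefore exactly the Toeplitz product $AU$ restricted to interior indices, and $A(U-U_h)$ is legitimate.

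\textbf{Bounds.} By \cref{T3.1}, applied with $d=1$ and under its hypotheses ($u\in C^s$ with $s\ge 8$, compact support, and the window function of \cref{r3.1}), we have $\|\mathcal{T}_h\|_{l^\infty(\Omega_h)}\le Ch^{4-\alpha}$. Then \cref{d4.1}, applied with right-hand side $-\mathcal{T}_h$, gives $\|E\|_{l^\infty}\le c_0^{-1}\|\mathcal{T}_h\|_{l^\infty}\le Ch^{4-\alpha}$. For the $l^2$ bound we use
\begin{align*}
\|\mathcal{T}_h\|_{l^2(\Omega_h)}\le \sqrt{hN}\,\|\mathcal{T}_h\|_{l^\infty(\Omega_h)}\le \sqrt{2}\,Ch^{4-\alpha},
\end{align*}
since $hN<|\Omega|=2$. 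Together with the $l^2$ stability bound, this gives $\|E\|_{l^2}\le Ch^{4-\alpha}$.

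\textbf{Main obstacle.} The key step is to ensure that the stability constant $c_0$ from \cref{l4.1} is bounded below independently of $h$. Here $L_\Omega=(N+1)h=N_1h=2$ is the fixed domain length, so
\begin{align*}
c_0=\frac{2C_{1,\alpha}(1-2^{-\alpha})}{\alpha e^{4\lambda}2^{\alpha}}
\end{align*}
depends only on $\alpha$ and $\lambda$. The hypotheses of \cref{l4.1} must also hold uniformly in $h$. These are $0\le w\le 1$ and $w'<0$ on $(0,L)$ with $L=N_\rho h$, which \cref{r4.1} verifies for every $h$, since the window is a rescaled fixed profile. Finally, the constant in \cref{T3.1} must be uniform in $x\in\Omega_h$, which holds because the derivative bounds on $u$ are global. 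With these checks in place, the constants in both estimates are independent of $h$, which completes the argument.
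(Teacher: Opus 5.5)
Your proposal is correct and follows essentially the same route as the paper. Both arguments form the error equation $A(U-U_h)=(-\Delta_h)_{\lambda}^{\frac{\alpha}{2}}U-(-\Delta)_{\lambda}^{\frac{\alpha}{2}}U$ using the zero exterior values, then combine the $l^2$- and $l^\infty$-stability of \cref{d4.1} with the truncation bound of \cref{T3.1} and $\|\cdot\|_{l^2(\Omega_h)}\le\sqrt{2}\,\|\cdot\|_{l^\infty(\Omega_h)}$; the paper re-runs the inner-product and maximum-principle arguments rather than citing the stability theorem directly, which amounts to the same thing. Your explicit check that $c_0$ is independent of $h$ (because $L_\Omega=(N+1)h=2$) is a point the paper leaves implicit, and it is what makes $C$ uniform in $h$.
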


\begin{proof}
Subtracting \eqref{eq:4.1} from \eqref{eq:1} at each $x_j\in\Omega_h$ gives the error equation
\begin{align}\label{eq:4.17}
(-\Delta_h)_{\lambda}^{\frac{\alpha}{2}}\left(U-U_h\right)=A\left(U-U_h\right)=(-\Delta_h)_{\lambda}^{\frac{\alpha}{2}}U-(-\Delta)_{\lambda}^{\frac{\alpha}{2}}U,
\end{align}
with $u_j-u_j^h=0$ for $x_j\in h\mathbb{Z}\setminus\Omega_h$.
Taking the $l^2$ inner product of \eqref{eq:4.17} with $U-U_h$, and leveraging \cref{T3.1} and \cref{d4.1}, we recover the estimate
\begin{align}\label{eq:4.21}
\|U-U_h\|_{l^2(\Omega_h)}&\leq \frac{1}{c_0}\left\|(-\Delta_h)_{\lambda}^{\frac{\alpha}{2}}U-(-\Delta)_{\lambda}^{\frac{\alpha}{2}}U\right\|_{l^2(\Omega_h)}\leq C\left\|(-\Delta)_{\lambda}^{\frac{\alpha}{2}}U - (-\Delta_h)_{\lambda}^{\frac{\alpha}{2}}U\right\|_{l^{\infty}(\Omega_h)}\leq C h^{4-\alpha}.
\end{align}

The $l^{\infty}$-norm error estimate is given as follows.

Under the condition $\|U-U_h\|_{l^\infty(\Omega_h)}=\left|u_{i_0}-u_{i_0}^h\right|$, repeating the argument in \eqref{eq:4.14}--\eqref{eq:4.15}, with $u_j^h$ and $u_{i_0}^h$ replaced by $u_j-u_j^h$ and $u_{i_0}-u_{i_0}^h$, respectively, we have
\begin{align}\label{eq:4.22}
\|U-U_h\|_{l^\infty(\Omega_h)}\leq C\left\|(-\Delta)_{\lambda}^{\frac{\alpha}{2}}U-(-\Delta_h)_{\lambda}^{\frac{\alpha}{2}}U\right\|_{l^{\infty}(\Omega_h)}.
\end{align}

Applying \cref{T3.1} to \eqref{eq:4.21} and \eqref{eq:4.22} establishes the desired estimates  \eqref{eq:4.16}.
\end{proof}
Algorithm 2 outlines the main computational steps of the WIRFD method for the TFL equation.
\begin{algorithm}[H]
  \caption{WIRFD approximation for the TFL equation}
  \label{alg:2}
\begin{itemize}
\item{\bf{Input: }}$d\in\{1,2,3\}$, $\lambda\geq 0$, $\alpha\in(0,2)$, $h>0$, $f\in \mathbb{R}^{N}$ on $\Omega_h$.\quad {\bf{Output: }}$U_h\in\mathbb{R}^{N}$.
\end{itemize}
\begin{algorithmic}
\STATE {\textbf{Step 1: }} Generate a uniform grid with spatial step size $h$ according to \eqref{eq:a2.13}.
\STATE {\textbf{Step 2: }} Assemble the first row (or first block row) of the matrix $A$ using \cref{alg:1}.
\STATE {\textbf{Step 3: }} Take $M$ as the preconditioner from \cite[Section 4.1]{HZD2021}.
\STATE {\textbf{Step 4: }} Solve \eqref{eq:4.1} for $U_h$ with the conjugate gradient (CG) or preconditioned conjugate gradient (PCG) method.
\end{algorithmic}
\end{algorithm}



\section{Numerical results}
This section presents numerical results supporting the error estimates of the WIRFD scheme \eqref{eq:2.14} for the TFL and the WIRFD approximation \eqref{eq:4.1} for the TFL equation. All tests were carried out in MATLAB R2020b (64-bit) on a 2.3 GHz workstation with 16 GB RAM.

\subsection{Numerical verification of the discrete TFL}
This subsection numerically evaluates the errors and convergence rates of the WIRFD scheme from \cref{T3.1} for $u\in C^s(\mathbb{R}^d)$ with distinct $\lambda$. Using \cref{alg:1}, we compute the discrete TFL $(-\Delta_h)_{\lambda}^{\frac{\alpha}{2}}U$  for several $h$. 
In the absence of an exact closed-form expression for the TFL, the $l^2$- and $l^{\infty}$-norm errors are measured by the differences between numerical approximations on two successive grids $e_{l^2}(h)=\left\|(-\Delta_h)_{\lambda}^{\frac{\alpha}{2}}U-\left(-\Delta_{h/2}\right)_{\lambda}^{\frac{\alpha}{2}}U\right\|_{l^{2}(\Omega_h)}$ and $e_{l^\infty}(h)=\left\|(-\Delta_h)_{\lambda}^{\frac{\alpha}{2}}U-\left(-\Delta_{h/2}\right)_{\lambda}^{\frac{\alpha}{2}}U\right\|_{l^{\infty}(\Omega_h)}.$
In each two-grid difference, the fine-grid quantity is restricted to the coarse grid. The convergence rates are $\log_{2}\left(\frac{e_{l^2}(2h)}{e_{l^2}(h)}\right)$ and $\log_{2}\left(\frac{e_{l^\infty}(2h)}{e_{l^\infty}(h)}\right)$.
\begin{example}\label{e1}
Take the function $u$ supported on $\Omega=(-1,1)$, defined by
\begin{align}\label{eq:5.1}
u(x)=(1-x^2)_+^p,\quad x\in\mathbb{R},\quad p>0,
\end{align}
where $(\cdot)_+=\max\{\cdot,0\}$.
\end{example}
The parameter $p$ controls the regularity of the function. Indeed, the function is smooth in the interior of $\Omega=(-1,1)$, and its
limited regularity occurs only at the boundary of its compact support. More precisely, if $p \in \mathbb{N}$, then $u\in C^{p-1,1}(\mathbb{R})\subset C^{p-1}(\mathbb{R})$, whereas if
$p\notin\mathbb{N}$, then $u\in C^{\lfloor p\rfloor,p-\lfloor p\rfloor}(\mathbb{R})\subset C^{\lfloor p \rfloor}(\mathbb{R})$.

\cref{tab1,tab2,tab3,tab4} report the $l^2$- and $l^{\infty}$-norm errors and convergence rates of the WIRFD scheme for the $d=1$ TFL, with $h=2^{-4}, 2^{-5}, \ldots, 2^{-8}$. Two parameter sets are considered: $\lambda=0.5$ with $p=8+\alpha$ for $\alpha=0.7, 1.0, 1.3, 1.6$ in \cref{tab1,tab2}, and $\lambda=1$ with $p=8.1$ in \cref{tab3,tab4}. In all tested cases, the errors $e_{l^2}$ and $e_{l^\infty}$ decrease steadily and the observed rates agree with $O(h^{4-\alpha})$, in accordance with the truncation error estimate in \cref{T3.1}. The same order is retained for both tested values of $\lambda$.

\begin{table}[H]
\centering
\begin{threeparttable}
\renewcommand{\arraystretch}{1.15} 
   \setlength{\tabcolsep}{9.pt}
\caption{Error $e_{l^2}(h)$ and convergence rate for $(-\Delta_h)_{\lambda}^{\frac{\alpha}{2}}U$ ($d=1$, $\lambda=0.5$, $p=8+\alpha$).}
\label{tab1}
\begin{tabular}{cccccccccc}
\toprule
\multirow{2}{*}{$h$} &
\multicolumn{2}{c}{$\alpha=0.7$} &
\multicolumn{2}{c}{$\alpha=1.0$} &
\multicolumn{2}{c}{$\alpha=1.3$}&
\multicolumn{2}{c}{$\alpha=1.6$}\\
\cmidrule(lr){2-3} \cmidrule(lr){4-5} \cmidrule(lr){6-7} \cmidrule(lr){8-9}
 & $e_{l^2}(h)$ & rate & $e_{l^2}(h)$ & rate & $e_{l^2}(h)$ & rate& $e_{l^2}(h)$ &rate \\
\midrule
$2^{-4}$ & 1.41e-04 & * & 1.51e-03  & * & 2.99e-03  & * & 2.16e-02  & * \\
$2^{-5}$ & 1.45e-05 &   3.29   &   1.91e-04  & 2.98  & 4.66e-04 &  2.68& 4.13e-03 & 2.38\\
$2^{-6}$ & 1.48e-06    &   3.29&  2.40e-05 & 3.00   & 7.19e-05 & 2.70  &  7.85e-04 &  2.40\\
$2^{-7}$ & 1.51e-07 &  3.29   &   3.00e-06  &  3.00 & 1.11e-05  & 2.70  & 1.49e-04 & 2.40\\
$2^{-8}$ &1.55e-08  &   3.28   & 3.74e-07 & 3.00  &   1.69e-06  & 2.71 & 2.81e-05 & 2.40 \\
\bottomrule
\end{tabular}
\end{threeparttable}
\end{table}
\vspace{-0.6cm}
\begin{table}[H]
\centering
\begin{threeparttable}
\renewcommand{\arraystretch}{1.15} 
   \setlength{\tabcolsep}{9.pt}
\caption{Error $e_{l^{\infty}}(h)$ and convergence rate for $(-\Delta_h)_{\lambda}^{\frac{\alpha}{2}}U$ ($d=1$, $\lambda=0.5$, $p=8+\alpha$).}
\label{tab2}
\begin{tabular}{cccccccccc}
\toprule
\multirow{2}{*}{$h$} &
\multicolumn{2}{c}{$\alpha=0.7$} &
\multicolumn{2}{c}{$\alpha=1.0$} &
\multicolumn{2}{c}{$\alpha=1.3$}&
\multicolumn{2}{c}{$\alpha=1.6$}\\
\cmidrule(lr){2-3} \cmidrule(lr){4-5} \cmidrule(lr){6-7} \cmidrule(lr){8-9}
 & $e_{l^{\infty}}(h)$ & rate & $e_{l^{\infty}}(h)$ & rate & $e_{l^{\infty}}(h)$ & rate& $e_{l^{\infty}}(h)$ &rate \\
\midrule
$2^{-4}$ & 2.21e-04  &   *  &   2.41e-03   & *  & 4.83e-03 &  *& 3.52e-03 &*\\
$2^{-5}$ & 2.27e-05  &  3.29 &   3.04e-04   & 2.98  & 7.51e-04 & 2.68& 6.75e-03 &2.38\\
$2^{-6}$ & 2.32e-06     &   3.29&  3.82e-05 & 3.00   & 1.16e-04  & 2.70  &  1.28e-03 &  2.40\\
$2^{-7}$ & 2.36e-07 &  3.29   &   4.77e-06  &  3.00 &  1.78e-05   & 2.70  & 2.43e-04 & 2.40\\
$2^{-8}$ &2.43e-08   &   3.28   & 5.94e-07 & 3.01  &    2.72e-06  & 2.71 & 4.58e-05 & 2.41 \\
\bottomrule
\end{tabular}
\end{threeparttable}
\end{table}
\vspace{-0.6cm}
\begin{table}[H]
\centering
\begin{threeparttable}
\renewcommand{\arraystretch}{1.15} 
   \setlength{\tabcolsep}{9.pt}
\caption{Error $e_{l^{2}}(h)$ and convergence rate for $(-\Delta_h)_{\lambda}^{\frac{\alpha}{2}}U$ ($d=1$, $\lambda=1.0$, $p=8.1$).}
\label{tab3}
\begin{tabular}{cccccccccc}
\toprule
\multirow{2}{*}{$h$} &
\multicolumn{2}{c}{$\alpha=0.7$} &
\multicolumn{2}{c}{$\alpha=1.0$} &
\multicolumn{2}{c}{$\alpha=1.3$}&
\multicolumn{2}{c}{$\alpha=1.6$}\\
\cmidrule(lr){2-3} \cmidrule(lr){4-5} \cmidrule(lr){6-7} \cmidrule(lr){8-9}
 & $e_{l^2}(h)$ & rate & $e_{l^2}(h)$ & rate & $e_{l^2}(h)$ & rate& $e_{l^2}(h)$ &rate \\
\midrule
$2^{-4}$ & 1.25e-04 & * &   1.26e-03 & *  & 2.36e-03 &  *& 1.61e-02 & *\\
$2^{-5}$ & 1.28e-05 & 3.28 &   1.60e-04 & 2.98  & 3.68e-04 &  2.68& 3.09e-03 & 2.38\\
$2^{-6}$ & 1.31e-06 & 3.29 &  2.00e-05 & 2.99  & 5.68e-05  & 2.69  &  5.88e-04 &  2.39\\
$2^{-7}$ & 1.33e-07 & 3.29 &   2.51e-06  & 3.00 & 8.74e-06   & 2.70  & 1.11e-04 & 2.40\\
$2^{-8}$ & 1.37e-08 &   3.28   & 3.12e-07 & 3.01  &  1.34e-06  & 2.71 &  2.10e-05 & 2.41 \\
\bottomrule
\end{tabular}
\end{threeparttable}
\end{table}
\vspace{-0.6cm}
\begin{table}[H]
\centering
\begin{threeparttable}
\renewcommand{\arraystretch}{1.15} 
   \setlength{\tabcolsep}{9.pt}
\caption{Error $e_{l^{\infty}}(h)$ and convergence rate for $(-\Delta_h)_{\lambda}^{\frac{\alpha}{2}}U$ ($d=1$, $\lambda=1.0$, $p=8.1$).}
\label{tab4}
\begin{tabular}{cccccccccc}
\toprule
\multirow{2}{*}{$h$} &
\multicolumn{2}{c}{$\alpha=0.7$} &
\multicolumn{2}{c}{$\alpha=1.0$} &
\multicolumn{2}{c}{$\alpha=1.3$}&
\multicolumn{2}{c}{$\alpha=1.6$}\\
\cmidrule(lr){2-3} \cmidrule(lr){4-5} \cmidrule(lr){6-7} \cmidrule(lr){8-9}
 & $e_{l^{\infty}}(h)$ & rate & $e_{l^{\infty}}(h)$ & rate & $e_{l^{\infty}}(h)$ & rate& $e_{l^{\infty}}(h)$ &rate \\
\midrule
$2^{-4}$ & 1.89e-04  & * &  1.91e-03 & * & 3.58e-03 & *& 2.45e-02 & *\\
$2^{-5}$ & 1.94e-05  & 3.28 &   2.42e-04 & 2.98 & 5.58e-04 & 2.68& 4.69e-03 & 2.38\\
$2^{-6}$ & 1.99e-06 & 3.29 &  3.04e-05  & 2.99  &  8.62e-05 & 2.69  &  8.92e-04  &  2.39\\
$2^{-7}$ & 2.03e-07 & 3.29 &   3.81e-06  & 3.00 & 1.33e-05   & 2.70  & 1.69e-04   & 2.40\\
$2^{-8}$ & 2.09e-08  &  3.28  & 4.73e-07 & 3.01  &  2.02e-06    & 2.71 &   3.19e-05   & 2.41 \\
\bottomrule
\end{tabular}
\end{threeparttable}
\end{table}

To verify the dimensionality robustness of the error estimates in \cref{T3.1}, we now examine the $d=2$ case.
\begin{example}\label{e2}
Consider the compactly supported function $u$ in $\Omega=(-1,1)^2$ defined by
\begin{align}\label{eq:5.2}
u(x_1,x_2)=\left[(1-x_1^2)_+(1-x_2^2)_+\right]^p,\quad (x_1,x_2)\in\mathbb{R}^2,
\end{align}
where $p>0$.
\end{example}
We ensure the test function $u\in C^s(\mathbb{R}^2)$ with $s\geq 8$ by selecting $p=8+\alpha$ and $p=8.1$. For $\alpha=0.7, 1.0, 1.3, 1.6$, \cref{tab5,tab6,tab7,tab8} summarize the $l^2$- and $l^{\infty}$-norm errors along with the corresponding convergence rates for several $h=2^{-4}, 2^{-5}, \ldots, 2^{-8}$ under two parameter configurations: $\lambda=0.5, p=8+\alpha$ and $\lambda=1.0, p=8.1$. In every tested case, the observed rates agree with $O(h^{4-\alpha})$, supporting the prediction in \cref{T3.1}. The order is retained across the two choices of $\lambda$ and the tested smoothness parameters.

\begin{table}[H]
\centering
\begin{threeparttable}
\renewcommand{\arraystretch}{1.15} 
   \setlength{\tabcolsep}{9.pt}
\caption{Error $e_{l^2}(h)$ and convergence rate for $(-\Delta_h)_{\lambda}^{\frac{\alpha}{2}}U$ ($d=2$, $\lambda=0.5$, $p=8+\alpha$).}
\label{tab5}
\begin{tabular}{cccccccccc}
\toprule
\multirow{2}{*}{$h$} &
\multicolumn{2}{c}{$\alpha=0.7$} &
\multicolumn{2}{c}{$\alpha=1.0$} &
\multicolumn{2}{c}{$\alpha=1.3$}&
\multicolumn{2}{c}{$\alpha=1.6$}\\
\cmidrule(lr){2-3} \cmidrule(lr){4-5} \cmidrule(lr){6-7} \cmidrule(lr){8-9}
 & $e_{l^2}(h)$ & rate & $e_{l^2}(h)$ & rate & $e_{l^2}(h)$ & rate& $e_{l^2}(h)$ &rate \\
\midrule
$2^{-4}$ & 6.14e-05 & * & 9.43e-04  & * & 1.10e-03   & * & 7.49e-03  & * \\
$2^{-5}$ & 6.31e-06 &  3.28 &   1.19e-04  & 2.98  & 1.72e-04 &  2.68& 1.44e-03 & 2.38\\
$2^{-6}$ & 6.42e-07 & 3.30 &  1.50e-05 & 3.00   & 2.65e-05 & 2.69 & 2.73e-04 &  2.39\\
$2^{-7}$ & 6.48e-08 & 3.31  &   1.86e-06  &  3.01  &4.08e-06& 2.70  & 5.18e-05 & 2.40\\
$2^{-8}$ & 6.37e-09 & 3.35 & 2.28e-07 & 3.03  & 6.22e-07  & 2.71 &  9.79e-06  & 2.40 \\
\bottomrule
\end{tabular}
\end{threeparttable}
\end{table}
\vspace{-0.6cm}
\begin{table}[H]
\centering
\begin{threeparttable}
\renewcommand{\arraystretch}{1.15} 
   \setlength{\tabcolsep}{9.pt}
\caption{Error $e_{l^{\infty}}(h)$ and convergence rate for $(-\Delta_h)_{\lambda}^{\frac{\alpha}{2}}U$ ($d=2$, $\lambda=0.5$, $p=8+\alpha$).}
\label{tab6}
\begin{tabular}{cccccccccc}
\toprule
\multirow{2}{*}{$h$} &
\multicolumn{2}{c}{$\alpha=0.7$} &
\multicolumn{2}{c}{$\alpha=1.0$} &
\multicolumn{2}{c}{$\alpha=1.3$}&
\multicolumn{2}{c}{$\alpha=1.6$}\\
\cmidrule(lr){2-3} \cmidrule(lr){4-5} \cmidrule(lr){6-7} \cmidrule(lr){8-9}
 & $e_{l^{\infty}}(h)$ & rate & $e_{l^{\infty}}(h)$ & rate & $e_{l^{\infty}}(h)$ & rate& $e_{l^{\infty}}(h)$ &rate \\
\midrule
$2^{-4}$ &2.75e-04& * & 4.36e-03   & * & 5.24e-03  & * & 3.66e-02 & * \\
$2^{-5}$ & 2.83e-05 &   3.28   &   5.51e-04 & 2.98  & 8.17e-04 &  2.68& 7.02e-03 & 2.38\\
$2^{-6}$ & 2.88e-06 & 3.30 &  6.91e-05  & 3.00 & 1.26e-04 & 2.70 &  1.34e-03 &  2.39\\
$2^{-7}$ & 2.90e-07 & 3.31 &   8.60e-06 & 3.01 & 1.94e-05  & 2.70  & 2.53e-04 & 2.40\\
$2^{-8}$ & 2.83e-08 & 3.35 & 1.05e-06 & 3.04 & 2.95e-06  & 2.72 & 4.79e-05 & 2.40 \\
\bottomrule
\end{tabular}
\end{threeparttable}
\end{table}
\vspace{-0.6cm}
\begin{table}[H]
\centering
\begin{threeparttable}
\renewcommand{\arraystretch}{1.15} 
   \setlength{\tabcolsep}{9.pt}
\caption{Error $e_{l^{2}}(h)$ and convergence rate for $(-\Delta_h)_{\lambda}^{\frac{\alpha}{2}}U$ ($d=2$, $\lambda=1.0$, $p=8.1$).}
\label{tab7}
\begin{tabular}{cccccccccc}
\toprule
\multirow{2}{*}{$h$} &
\multicolumn{2}{c}{$\alpha=0.7$} &
\multicolumn{2}{c}{$\alpha=1.0$} &
\multicolumn{2}{c}{$\alpha=1.3$}&
\multicolumn{2}{c}{$\alpha=1.6$}\\
\cmidrule(lr){2-3} \cmidrule(lr){4-5} \cmidrule(lr){6-7} \cmidrule(lr){8-9}
 & $e_{l^2}(h)$ & rate & $e_{l^2}(h)$ & rate & $e_{l^2}(h)$ & rate& $e_{l^2}(h)$ &rate \\
\midrule
$2^{-4}$ & 5.48e-05 & * &  8.02e-04  & * &  8.96e-04  & * & 5.83e-03& * \\
$2^{-5}$ & 5.66e-06 & 3.28 &  1.02e-04  & 2.98  & 1.40e-04 &  2.68& 1.12e-03 & 2.38\\
$2^{-6}$ & 5.77e-07 & 3.29 &  1.28e-05 & 2.99  & 2.17e-05 & 2.69  & 2.13e-04 &  2.39\\
$2^{-7}$ &5.83e-08 & 3.31 &   1.59e-06  & 3.00 & 3.33e-06  & 2.70  & 4.04e-05  & 2.40\\
$2^{-8}$ &  5.72e-09 &   3.35  & 1.95e-07 & 3.03  &  5.07e-07   & 2.72 &  7.64e-06  & 2.40 \\
\bottomrule
\end{tabular}
\end{threeparttable}
\end{table}
\vspace{-0.6cm}
\begin{table}[H]
\centering
\begin{threeparttable}
\renewcommand{\arraystretch}{1.15} 
   \setlength{\tabcolsep}{9.pt}
\caption{Error $e_{l^{\infty}}(h)$ and convergence rate for $(-\Delta_h)_{\lambda}^{\frac{\alpha}{2}}U$ ($d=2$, $\lambda=1.0$, $p=8.1$).}
\label{tab8}
\begin{tabular}{cccccccccc}
\toprule
\multirow{2}{*}{$h$} &
\multicolumn{2}{c}{$\alpha=0.7$} &
\multicolumn{2}{c}{$\alpha=1.0$} &
\multicolumn{2}{c}{$\alpha=1.3$}&
\multicolumn{2}{c}{$\alpha=1.6$}\\
\cmidrule(lr){2-3} \cmidrule(lr){4-5} \cmidrule(lr){6-7} \cmidrule(lr){8-9}
 & $e_{l^{\infty}}(h)$ & rate & $e_{l^{\infty}}(h)$ & rate & $e_{l^{\infty}}(h)$ & rate& $e_{l^{\infty}}(h)$ &rate \\
\midrule
$2^{-4}$ & 2.34e-04 & * & 3.45e-03   & * & 3.88e-03 & * & 2.54e-02 & * \\
$2^{-5}$ & 2.41e-05  & 3.28 &   4.38e-04 & 2.98  & 6.06e-04 &  2.68&4.88e-03& 2.38\\
$2^{-6}$ & 2.46e-06 & 3.29 &  5.50e-05  & 2.99  & 9.37e-05 & 2.69  & 9.28e-04 &  2.39\\
$2^{-7}$ & 2.48e-07 & 3.31 &   6.84e-06 & 3.01 &  1.44e-05   & 2.70  & 1.76e-04  & 2.40\\
$2^{-8}$ & 2.42e-08 &  3.36  & 8.32e-07  & 3.04  &  2.19e-06  & 2.72 &  3.33e-05  & 2.40 \\
\bottomrule
\end{tabular}
\end{threeparttable}
\end{table}
\subsection{WIRFD simulations of the TFL equation}
This subsection investigates the performance of the WIRFD approximation for solving the TFL equation via \cref{alg:2}, focusing on the $l^2$-norm and $l^{\infty}$-norm errors, the associated convergence rates, and the efficiency of the preconditioner. The PCG solver in \cref{alg:2} employs a relative residual tolerance of $10^{-16}$, a maximum of $1500$ iterations, and a zero initial guess. For the exact grid function $U$ and its numerical approximation $U_h$, the $l^2$- and $l^{\infty}$-norm errors are measured by
\begin{align}\label{eq:05.3}
E_{l^2}(h)=\begin{cases}
\left\|U-U_h\right\|_{l^2(\Omega_h)},\quad &U \ \text{known},\\
\left\|U_h-U_{h/2}\right\|_{l^2(\Omega_h)},\quad &U\ \text{unknown},
\end{cases}\quad
E_{l^\infty}(h)=\begin{cases}
\left\|U-U_h\right\|_{l^{\infty}(\Omega_h)},\quad &U\ \text{known},\\
\left\|U_h-U_{h/2}\right\|_{l^{\infty}(\Omega_h)},\quad &U\ \text{unknown}.
\end{cases}
\end{align}
For the errors in \eqref{eq:05.3}, where $U_{h/2}$ is restricted to $\Omega_h$, the convergence rates are defined as $\log_{2}\left(\frac{E_{l^2}(2h)}{E_{l^2}(h)}\right)$ and $\log_{2}\left(\frac{E_{l^\infty}(2h)}{E_{l^\infty}(h)}\right)$.

To assess the performance and validate the error estimates from \cref{D4.2}, we consider the following problems.
\begin{example}\label{exam3}
For the $d=1$ TFL equation
\begin{alignat}{2}\label{eq:5.4}
(-\Delta)_{\lambda}^{\frac{\alpha}{2}}u(x) &= f(x), \quad && x \in (-1,1), \nonumber \\
u(x) &= 0, \quad && x \in \mathbb{R} \setminus (-1,1),
\end{alignat}
we take the exact solution as $u=(1-x^2)_+^p\, (p>0)$, which is given by \eqref{eq:5.1}.
\end{example}
In \cref{exam3}, an unknown source grid function $f$ is approximated by $f_{h_{\rm ref}}=(-\Delta_{h_{\rm ref}})_{\lambda}^{\frac{\alpha}{2}}U$ with $h_{\rm ref}=2^{-10}$. For $\lambda=0.5, p=8+\alpha$ with $\alpha=0.8, 1.6$, and $u\in C^{s}(\mathbb{R})$ where $s=\lfloor p\rfloor$, \cref{tab9} lists the $l^{\infty}$-norm error $E_{l^\infty}(h)$, the convergence rate, the iteration count (Iter), and the CPU time (in seconds) of the PCG and CG methods for solving the linear system with $h=2^{-6}, 2^{-7}, 2^{-8}, 2^{-9}$. For $\alpha=1.6$, the observed rate is consistent with \cref{D4.2}; the result for $\alpha=0.8$ provides numerical evidence beyond the proven range $\alpha\in[1,2)$. Moreover, PCG outperforms CG, requiring fewer iterations on all meshes and consuming less CPU time on finer meshes.

To further substantiate the error estimates in \cref{D4.2} and investigate the effects of $\lambda$ and $s$ on the convergence behavior, we perform numerical experiments with $\lambda=1.0$, $p=8.1$, and $u\in C^{8}(\mathbb{R})$. \cref{tab10,tab11} present the $l^2$- and $l^\infty$-norm errors, $E_{l^2}(h)$ and $E_{l^\infty}(h)$, together with the corresponding convergence rates of the numerical solution for $\alpha=0.7, 1.0, 1.3, 1.6$ and $h=2^{-5}, 2^{-6},\ldots, 2^{-9}$. Together with \cref{tab9}, these results show the rate $O(h^{4-\alpha})$ throughout the tested parameter range. The cases $\alpha\in[1,2)$ agree with \cref{D4.2}, whereas $\alpha=0.7$ again lies outside the current stability and convergence theory.

For $\lambda=1.0$ and $p=1+\alpha, 2+\alpha$ with $\alpha=0.8, 1.6$, \cref{tab12} gives
the $l^{\infty}$-norm error $E_{l^{\infty}}(h)$ and its convergence rate for $u\in C^{\lfloor p\rfloor}(\mathbb{R})$ with $h=2^{-5}, 2^{-6},\ldots, 2^{-9}$. The results suggest that the $l^{\infty}$-norm convergence rate remains $O(h^{4-\alpha})$ for moderately regular solutions and decreases approximately to $O(h^{\min\{p,4-\alpha\}})$ for less regular solutions. Thus, the method appears effective below the regularity threshold assumed in \cref{D4.2}, although this regime is not covered by the present analysis.

To clarify the role of $\lambda$ in the numerical error, \cref{fig:1} plots the $l^{\infty}$-norm error $E_{l^{\infty}}(h)$ for $p=8+\alpha$ with $\alpha=0.8, 1.6$ and $\lambda= 0.0, 0.5, 1.0, 1.5, 3.0, 5.0$ over spatial step sizes $h=2^{-4},2^{-5},\ldots, 2^{-9}$. For smaller $\alpha$, the error $E_{l^{\infty}}(h)$ increases with $\lambda$; for larger $\alpha$, it is nearly insensitive to $\lambda$. The observed rate remains $O(h^{4-\alpha})$ for all tested values of $\lambda$, showing that the order is retained across this parameter range.

\begin{table}[H]
\centering
\begin{threeparttable}
\caption{Error $E_{l^\infty}(h)$, convergence rate, iteration count (Iter), and CPU time of the PCG and CG solvers for \cref{exam3} ($d=1$, $\lambda=0.5$, $p=8+\alpha$).}
\label{tab9}

\setlength{\tabcolsep}{2.2pt}
\renewcommand{\arraystretch}{1.15}
\begin{tabular*}{0.95\textwidth}{
  @{\extracolsep{\fill}}
  c
  *{6}{c}
  @{\hspace{1.15em}}
  *{6}{c}
  @{}
}
\hline

$h$
& \multicolumn{6}{l}{$\alpha=0.8$}
& \multicolumn{6}{l}{$\alpha=1.6$} \\
\cmidrule(l{0.0em}r{1.0em}){2-7}
\cmidrule(l{0.0em}r{0.2em}){8-13}

& $E_{l^{\infty}}(h)$ & rate
& \multicolumn{2}{c}{PCG}
& \multicolumn{2}{c}{CG}
& $E_{l^{\infty}}(h)$ & rate
& \multicolumn{2}{c}{PCG}
& \multicolumn{2}{c}{CG} \\

\cmidrule(l{0.2em}r{0.2em}){4-5}
\cmidrule(l{0.2em}r{1.1em}){6-7}
\cmidrule(l{0.2em}r{0.2em}){10-11}
\cmidrule(l{0.2em}){12-13}

& & & Iter & CPU time & Iter & CPU time
& & & Iter & CPU time & Iter & CPU time \\
\hline
$2^{-6}$
&  2.94e-06  &*
& 12  & 6.50e-03
& 48 &  5.30e-03

& 8.54e-05 & *
& 17  &  6.85e-03
& 97  & 5.44e-03 \\

$2^{-7}$
& 3.21e-07 & 3.20
& 13  & 6.61e-03
& 66 &  5.99e-03

& 1.60e-05 & 2.41
& 21  & 6.91e-03
& 170   & 7.45e-03 \\

$2^{-8}$
& 3.49e-08 & 3.20
& 13  & 7.38e-03
& 89 &  8.75e-03

& 2.91e-06 & 2.46
& 23  &  8.43e-03
& 302  & 1.73e-02  \\

$2^{-9}$
& 3.53e-09 & 3.30
& 14   & 9.76e-03
& 120 & 1.32e-02

& 4.47e-07 &  2.70
& 27  &   9.22e-03
&  531  & 3.48e-02 \\
\hline
\end{tabular*}
\end{threeparttable}
\end{table}
\vspace{-0.6cm}
\begin{table}[H]
\centering
\begin{threeparttable}
\renewcommand{\arraystretch}{1.15} 
   \setlength{\tabcolsep}{9.pt}
\caption{Error $E_{l^{2}}(h)$ and convergence rate for \cref{exam3} ($d=1$, $\lambda=1.0$, $p=8.1$).}
\label{tab10}
\begin{tabular}{cccccccccc}
\toprule
\multirow{2}{*}{$h$} &
\multicolumn{2}{c}{$\alpha=0.7$} &
\multicolumn{2}{c}{$\alpha=1.0$} &
\multicolumn{2}{c}{$\alpha=1.3$}&
\multicolumn{2}{c}{$\alpha=1.6$}\\
\cmidrule(lr){2-3} \cmidrule(lr){4-5} \cmidrule(lr){6-7} \cmidrule(lr){8-9}
 & $E_{l^2}(h)$ & rate & $E_{l^2}(h)$ & rate & $E_{l^2}(h)$ & rate& $E_{l^2}(h)$ &rate \\
\midrule
$2^{-5}$ & 1.23e-05  & *  &   3.48e-05  & * & 9.41e-05  &  *& 2.43e-04 &*\\
$2^{-6}$ & 1.25e-06 & 3.29 &  4.37e-06  & 3.00 &  1.45e-05 & 2.70 & 4.61e-05 &  2.40\\
$2^{-7}$ & 1.28e-07 & 3.29 &   5.45e-07 & 3.00 &  2.22e-06   & 2.71   & 8.67e-06  & 2.41 \\
$2^{-8}$ & 1.32e-08 &  3.28  & 6.67e-08  &3.03&   3.29e-07  & 2.75 &   1.58e-06   & 2.46 \\
$2^{-9}$ & 1.29e-09 & 3.35 & 7.24e-09   &3.20  & 4.20e-08 & 2.97 &  2.45e-07  &  2.69 \\
\bottomrule
\end{tabular}
\end{threeparttable}
\end{table}
\vspace{-0.6cm}
\begin{table}[H]
\centering
\begin{threeparttable}
\renewcommand{\arraystretch}{1.15} 
   \setlength{\tabcolsep}{9.pt}
\caption{Error $E_{l^{\infty}}(h)$ and convergence rate for \cref{exam3} ($d=1$, $\lambda=1.0$, $p=8.1$).}
\label{tab11}
\begin{tabular}{cccccccccc}
\toprule
\multirow{2}{*}{$h$} &
\multicolumn{2}{c}{$\alpha=0.7$} &
\multicolumn{2}{c}{$\alpha=1.0$} &
\multicolumn{2}{c}{$\alpha=1.3$}&
\multicolumn{2}{c}{$\alpha=1.6$}\\
\cmidrule(lr){2-3} \cmidrule(lr){4-5} \cmidrule(lr){6-7} \cmidrule(lr){8-9}
 & $E_{l^{\infty}}(h)$ & rate & $E_{l^{\infty}}(h)$ & rate & $E_{l^{\infty}}(h)$ & rate& $E_{l^{\infty}}(h)$ &rate \\
\midrule
$2^{-5}$ & 1.96e-05 &  *  &   5.58e-05  & * & 1.52e-04   & *& 3.95e-04   &*\\
$2^{-6}$ & 2.00e-06  & 3.29  &  6.99e-06  &  3.00 &  2.34e-05  & 2.70 &  7.49e-05 &  2.40\\
$2^{-7}$ & 2.05e-07  & 3.29 &   8.71e-07& 3.01 &  3.56e-06    & 2.71   & 1.40e-05  & 2.41 \\
$2^{-8}$ & 2.12e-08 &  3.27  & 1.06e-07  & 3.04 & 5.23e-07  & 2.77 &   2.54e-06    & 2.47 \\
$2^{-9}$ & 2.13e-09 & 3.32 & 1.14e-08   &3.22  & 6.45e-08  & 3.02&  3.87e-07   &  2.71 \\
\bottomrule
\end{tabular}
\end{threeparttable}
\end{table}
\vspace{-0.6cm}
\begin{table}[H]
\centering
\begin{threeparttable}
\renewcommand{\arraystretch}{1.15} 
    \setlength{\tabcolsep}{9.pt}
\caption{Error $E_{l^{\infty}}(h)$ and convergence rate for \cref{exam3} ($d=1$, $\lambda=1.0$, $p=1+\alpha, 2+\alpha$).}
\label{tab12}
\begin{tabular}{cccccccccc}
\toprule
\multirow{2}{*}{$h$} &
\multicolumn{2}{c}{$p=1+\alpha,\ \alpha=0.8$} &
\multicolumn{2}{c}{$p=1+\alpha,\ \alpha=1.6$} &
\multicolumn{2}{c}{$p=2+\alpha,\ \alpha=0.8$}&
\multicolumn{2}{c}{$p=2+\alpha,\ \alpha=1.6$}\\
\cmidrule(lr){2-3} \cmidrule(lr){4-5} \cmidrule(lr){6-7} \cmidrule(lr){8-9} \cmidrule(lr){8-9}
 & $E_{l^{\infty}}(h)$ & rate & $E_{l^{\infty}}(h)$ & rate & $E_{l^{\infty}}(h)$ & rate & $E_{l^{\infty}}(h)$ & rate \\
\midrule
$2^{-5}$ &  5.09e-05    &  * &     1.29e-04  &  *    &      1.17e-05    &  *  &     1.43e-04  & *  \\
$2^{-6}$ & 1.30e-05 &  1.97   &    2.71e-05  & 2.25 &  1.40e-06 &  3.06 & 2.72e-05  &  2.40    \\
$2^{-7}$ & 3.50e-06 &  1.89  &    5.47e-06  & 2.31  &  1.64e-07   &    3.10 &   5.06e-06  &  2.42    \\
$2^{-8}$ &   9.67e-07   & 1.86    &   1.06e-06    & 2.37  &   1.87e-08    & 3.13    &  8.90e-07 &  2.51    \\
$2^{-9}$ &  2.56e-07  &1.92  &    1.74e-07  & 2.60 &   1.92e-09  &    3.28  &  1.41e-07  &  2.66   \\
\bottomrule
\end{tabular}
\end{threeparttable}
\end{table}
\vspace{-0.6cm}
\begin{figure}[H]
\centering
\includegraphics[width=0.45\linewidth]{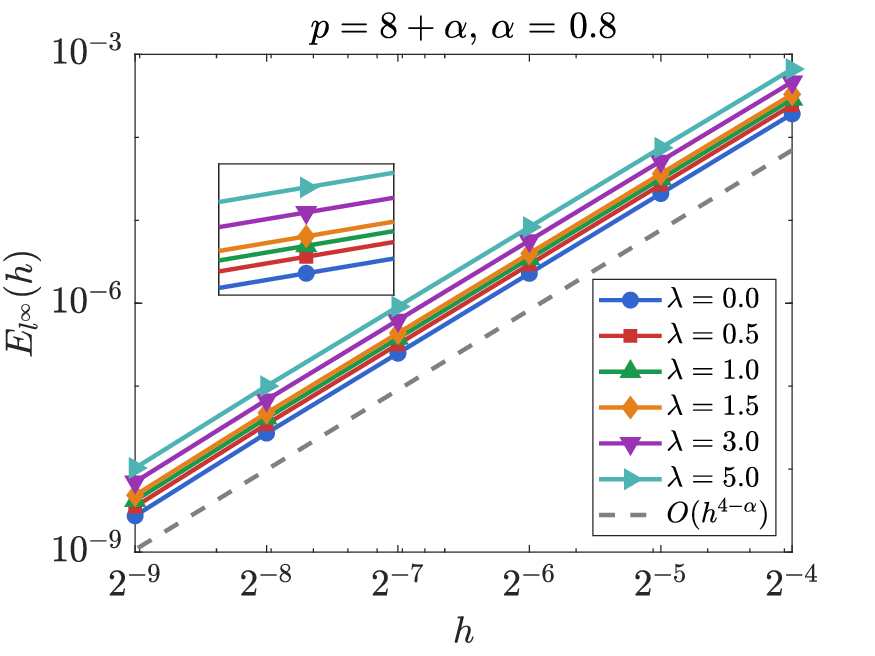}
\includegraphics[width=0.45\linewidth]{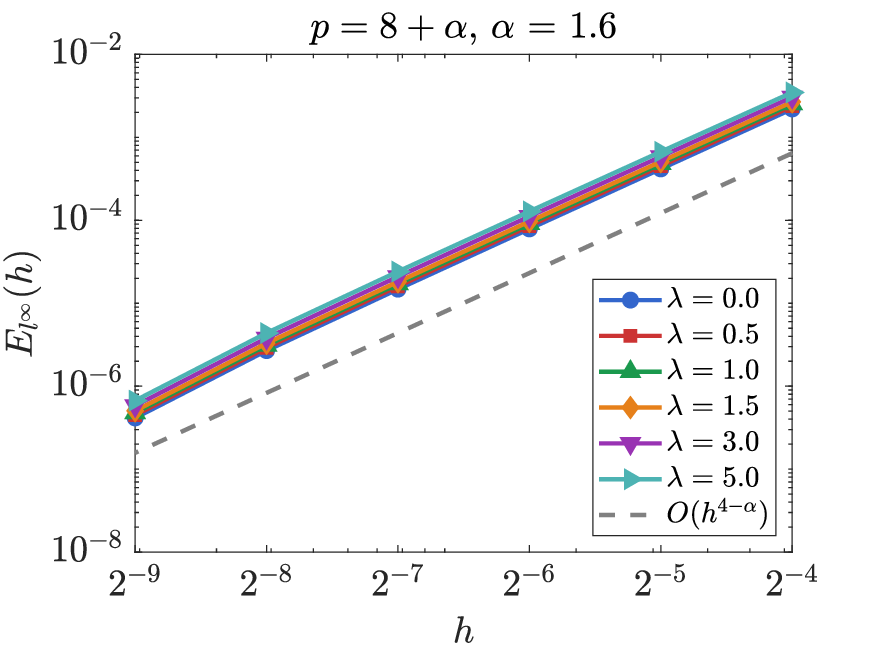}
\caption{Error $E_{l^\infty}(h)$ for the numerical solution of \cref{exam3}  ($\alpha = 0.8$, $1.6$; $\lambda = 0.0$, $0.5$, $1.0$, $1.5$, $3.0$, $5.0$).}
\label{fig:1}
\end{figure}
\allowdisplaybreaks
Although the analysis in \cref{D4.2} is confined to the case of $d=1$, the WIRFD method  is readily applicable to $d=2$ problems, as demonstrated in \cref{exam4}.
\begin{example}\label{exam4}
We consider the following $d=2$ TFL equation:
\begin{alignat}{2}\label{eq:5.3}
(-\Delta)_{\lambda}^{\frac{\alpha}{2}}u(x) &= f(x), \quad && x \in (-1,1)^2, \nonumber \\
u(x) &= 0, \quad && x \in\mathbb{R}^2 \setminus (-1,1)^2.
\end{alignat}
\end{example}
\textbf{Case 1.} For the exact solution  $u(x)=\left[(1-x_1^2)_+(1-x_2^2)_+\right]^p$ with $p>0$ and the unknown $f$, we take $f\approx f_{h_{\rm ref}}=(-\Delta_{h_{\rm ref}})_{\lambda}^{\frac{\alpha}{2}}U$ with $h_{\rm ref}=2^{-10}$ and solve the resulting system. Adopting the same setup as in \cref{tab9,tab10,tab11,tab12} for \cref{exam3}, the corresponding numerical results for the $d=2$ case are reported in \cref{tab13,tab14,tab15,tab16}. For smooth solutions, the observed rate is $O(h^{4-\alpha})$ across the tested values of $\lambda$. The results also suggest a rate of approximately $O(h^{\min\{p,4-\alpha\}})$ for less regular solutions. These findings demonstrate the practical effectiveness of the method in a regime not covered by the present higher-dimensional solution error analysis. Notably, PCG exhibits a clear advantage over CG for fine $h$, whereas the two solvers perform comparably for small $\alpha$ and coarse $h$.

\textbf{Case 2.} For the source function $f(x)=1$, whose exact solution is unknown, we evaluate the $l^2$- and $l^\infty$-norm errors defined in \eqref{eq:05.3}. \cref{tab17} presents the $l^{\infty}$-norm error $E_{l^\infty}(h)$, the corresponding convergence rate, the iteration count (Iter), and the CPU time (in seconds) for the PCG and CG solvers, with $\lambda=0.5$, $\alpha=0.8, 1.6$, and $h=2^{-5}, 2^{-6}, \ldots, 2^{-9}$. The error $E_{l^\infty}(h)$ decays at an observed convergence rate of $O(h^{\frac{\alpha}{2}})$, and PCG consistently outperforms CG in both iteration count and CPU time. Furthermore, \cref{tab18} reports the $l^2$-norm error $E_{l^2}(h)$ for $\lambda=0.5$, $\alpha=0.8, 1.0, 1.3, 1.6$, on the same grid, where the observed convergence rate agrees with $O\left(h^{\min\left\{1,\frac{1+\alpha}{2}\right\}}\right)$.
\cref{tab19} summarizes the $l^{\infty}$-norm error $E_{l^{\infty}}(h)$ and convergence rate for $\lambda=0.2$, $\alpha=0.8, 1.0, 1.3, 1.6$, and $h=2^{-5}, 2^{-6}, \ldots, 2^{-9}$, where the $l^{\infty}$-norm convergence rate is again $O(h^{\frac{\alpha}{2}})$. Comparison of \cref{tab17} ($\lambda=0.5$) and \cref{tab19} ($\lambda=0.2$) for $\alpha=0.8, 1.6$ shows that $E_{l^{\infty}}(h)$ is larger for larger $\lambda$ when $\alpha$ is small but is nearly insensitive to $\lambda$ for larger $\alpha$, while the observed convergence rate remains $O(h^{\frac{\alpha}{2}})$ for both tested values of $\lambda$.
\vspace{-0.2cm}
\begin{table}[H]
\centering
\begin{threeparttable}
\caption{Error $E_{l^\infty}(h)$, convergence rate, iteration count (Iter), and CPU time of the PCG and CG solvers for $\textbf{Case 1}$ ($d=2$, $\lambda=0.5$, $p=8+\alpha$).}
\label{tab13}

\setlength{\tabcolsep}{2.2pt}
\renewcommand{\arraystretch}{1.15}
\begin{tabular*}{0.95\textwidth}{
  @{\extracolsep{\fill}}
  c
  *{6}{c}
  @{\hspace{1.15em}}
  *{6}{c}
  @{}
}
\hline

$h$
& \multicolumn{6}{l}{$\alpha=0.8$}
& \multicolumn{6}{l}{$\alpha=1.6$} \\
\cmidrule(l{0.0em}r{1.0em}){2-7}
\cmidrule(l{0.0em}r{0.2em}){8-13}

& $E_{{l^\infty}}(h)$ & rate
& \multicolumn{2}{c}{PCG}
& \multicolumn{2}{c}{CG}
& $E_{{l^\infty}}(h)$ & rate
& \multicolumn{2}{c}{PCG}
& \multicolumn{2}{c}{CG} \\

\cmidrule(l{0.2em}r{0.2em}){4-5}
\cmidrule(l{0.2em}r{1.1em}){6-7}
\cmidrule(l{0.2em}r{0.2em}){10-11}
\cmidrule(l{0.2em}){12-13}

& & & Iter & CPU time & Iter & CPU time
& & & Iter & CPU time & Iter & CPU time \\
\hline
$2^{-6}$
& 4.95e-06  & *
& 24  & 4.69e-01
&  71  &  4.17e-01

& 1.31e-04 & *
& 45 & 5.26e-01
&  229  & 8.39e-01\\

$2^{-7}$
& 5.31e-07 & 3.22
& 27  & 1.37e+00
& 95  & 1.65e+00

&  2.47e-05 & 2.41
&  62   & 1.68e+00
& 400    &  3.83e+00 \\

$2^{-8}$
&  5.40e-08 & 3.30
& 30  &  5.35e+00
& 127 &  8.33e+00

& 4.53e-06 & 2.45
& 89  &  8.48e+00
& 752 &  2.73e+01  \\

$2^{-9}$
& 4.40e-09 & 3.62
& 33  &  2.13e+01
& 169 & 7.74e+01

& 7.10e-07 &  2.67
& 127  &  3.53e+01
&  1375   &  1.75e+02   \\
\hline
\end{tabular*}
\end{threeparttable}
\end{table}
\vspace{-0.9cm}
\begin{table}[H]
\centering
\begin{threeparttable}
\renewcommand{\arraystretch}{1.15} 
   \setlength{\tabcolsep}{9.pt}
\caption{Error $E_{l^{2}}(h)$ and convergence rate for $\textbf{Case 1}$ ($d=2$, $\lambda=1.0$, $p=8.1$).}
\label{tab14}
\begin{tabular}{cccccccccc}
\toprule
\multirow{2}{*}{$h$} &
\multicolumn{2}{c}{$\alpha=0.7$} &
\multicolumn{2}{c}{$\alpha=1.0$} &
\multicolumn{2}{c}{$\alpha=1.3$}&
\multicolumn{2}{c}{$\alpha=1.6$}\\
\cmidrule(lr){2-3} \cmidrule(lr){4-5} \cmidrule(lr){6-7} \cmidrule(lr){8-9}
 & $E_{l^2}(h)$ & rate & $E_{l^2}(h)$ & rate & $E_{l^2}(h)$ & rate& $E_{l^2}(h)$ &rate \\
\midrule
$2^{-5}$ & 1.15e-05  &  *  &   3.12e-05  & * & 8.15e-05  &  *& 2.05e-04& *\\
$2^{-6}$ & 1.17e-06  & 3.30  &  3.91e-06  & 3.00 &  1.26e-05 & 2.70 & 3.89e-05 &  2.40\\
$2^{-7}$ & 1.17e-07  & 3.32 &   4.83e-07 & 3.02  &  1.92e-06  & 2.71   & 7.32e-06 & 2.41 \\
$2^{-8}$ & 1.12e-08 & 3.39 & 5.72e-08  &3.08&  2.83e-07  & 2.76 & 1.34e-06 & 2.45 \\
$2^{-9}$ & 8.80e-10 & 3.67 & 5.66e-09 & 3.34 & 3.56e-08  & 2.99 &  2.10e-07  &  2.67 \\
\bottomrule
\end{tabular}
\end{threeparttable}
\end{table}
\vspace{-0.6cm}
\begin{table}[H]
\centering
\begin{threeparttable}
\renewcommand{\arraystretch}{1.15} 
   \setlength{\tabcolsep}{9.pt}
\caption{Error $E_{l^{\infty}}(h)$ and convergence rate for $\textbf{Case 1}$ ($d=2$, $\lambda=1.0$, $p=8.1$).}
\label{tab15}
\begin{tabular}{cccccccccc}
\toprule
\multirow{2}{*}{$h$} &
\multicolumn{2}{c}{$\alpha=0.7$} &
\multicolumn{2}{c}{$\alpha=1.0$} &
\multicolumn{2}{c}{$\alpha=1.3$}&
\multicolumn{2}{c}{$\alpha=1.6$}\\
\cmidrule(lr){2-3} \cmidrule(lr){4-5} \cmidrule(lr){6-7} \cmidrule(lr){8-9}
 & $E_{{l^\infty}}(h)$ & rate & $E_{{l^\infty}}(h)$ & rate & $E_{{l^\infty}}(h)$ & rate& $E_{{l^\infty}}(h)$ &rate \\
\midrule
$2^{-5}$ & 3.44e-05 & *  &   9.30e-05 & * & 2.42e-04 &  * &  6.06e-04  & *\\
$2^{-6}$ & 3.49e-06  & 3.30  &  1.16e-05 &  3.00 & 3.73e-05  & 2.70 &1.15e-04&  2.40\\
$2^{-7}$ & 3.49e-07  & 3.33 &   1.43e-06 & 3.02 & 5.69e-06  & 2.71   & 2.17e-05 & 2.41 \\
$2^{-8}$ & 3.27e-08  & 3.41 & 1.68e-07  & 3.09 & 8.35e-07  & 2.77 & 3.96e-06  & 2.45 \\
$2^{-9}$ & 2.37e-09 &  3.79  & 1.58e-08 & 3.41 &  1.03e-07  & 3.02& 6.19e-07  &  2.68 \\
\bottomrule
\end{tabular}
\end{threeparttable}
\end{table}
\vspace{-0.6cm}
\begin{table}[H]
\centering
\begin{threeparttable}
\renewcommand{\arraystretch}{1.2} 
    \setlength{\tabcolsep}{9.pt}
\caption{Error $E_{l^{\infty}}(h)$ and convergence rate for $\textbf{Case 1}$ ($d=2$, $\lambda=1.0$, $p=1+\alpha, 2+\alpha$).}
\label{tab16}
\begin{tabular}{cccccccccc}
\toprule
\multirow{2}{*}{$h$} &
\multicolumn{2}{c}{$p=1+\alpha,\ \alpha=0.8$} &
\multicolumn{2}{c}{$p=1+\alpha,\ \alpha=1.6$} &
\multicolumn{2}{c}{$p=2+\alpha,\ \alpha=0.8$}&
\multicolumn{2}{c}{$p=2+\alpha,\ \alpha=1.6$}\\
\cmidrule(lr){2-3} \cmidrule(lr){4-5} \cmidrule(lr){6-7} \cmidrule(lr){8-9} \cmidrule(lr){8-9}
 & $E_{l^\infty}(h)$ & rate & $E_{l^\infty}(h)$ & rate & $E_{l^\infty}(h)$ & rate & $E_{l^\infty}(h)$ & rate \\
\midrule
$2^{-5}$ &  5.25e-05    & * &     1.45e-04  &  *   &      1.06e-05    &  *  &     2.21e-04  & *  \\
$2^{-6}$ &  1.33e-05 &  1.98   &    2.60e-05 & 2.48 &  1.27e-06  &  3.05 & 4.19e-05 &  2.40    \\
$2^{-7}$ & 3.57e-06 &  1.90  &    4.66e-06  &  2.48  &  1.49e-07  &    3.09 &   7.87e-06  &  2.41   \\
$2^{-8}$ &  9.80e-07   & 1.87    &   9.02e-07   & 2.37  &  1.70e-08   & 3.14  &  1.42e-06 &  2.47    \\
$2^{-9}$ &  2.58e-07   &1.92  &    1.49e-07   & 2.60 & 1.73e-09 &    3.29 &  2.16e-07  &   2.72   \\
\bottomrule
\end{tabular}
\end{threeparttable}
\end{table}
\vspace{-0.6cm}
\begin{table}[H]
\centering
\begin{threeparttable}
\caption{Error $E_{l^\infty}(h)$, convergence rate, iteration count (Iter), and CPU time of the PCG and CG solvers for $\textbf{Case 2}$ ($d=2$, $\lambda=0.5$).}
\label{tab17}

\setlength{\tabcolsep}{2.2pt}
\renewcommand{\arraystretch}{1.15}
\begin{tabular*}{0.95\textwidth}{
  @{\extracolsep{\fill}}
  c
  *{6}{c}
  @{\hspace{1.15em}}
  *{6}{c}
  @{}
}
\hline

$h$
& \multicolumn{6}{l}{$\alpha=0.8$}
& \multicolumn{6}{l}{$\alpha=1.6$} \\
\cmidrule(l{0.0em}r{1.0em}){2-7}
\cmidrule(l{0.0em}r{0.2em}){8-13}

& $E_{l^{\infty}}(h)$ & rate
& \multicolumn{2}{c}{PCG}
& \multicolumn{2}{c}{CG}
& $E_{l^{\infty}}(h)$ & rate
& \multicolumn{2}{c}{PCG}
& \multicolumn{2}{c}{CG} \\

\cmidrule(l{0.2em}r{0.2em}){4-5}
\cmidrule(l{0.2em}r{1.1em}){6-7}
\cmidrule(l{0.2em}r{0.2em}){10-11}
\cmidrule(l{0.2em}){12-13}

& & & Iter & CPU time & Iter & CPU time
& & & Iter & CPU time & Iter & CPU time \\
\hline
%
%
$2^{-5}$
& 1.63e-01 &  *
& 22 & 1.26e-01
& 56 & 1.40e-01

& 5.58e-03 &  *
& 34  &1.28e-01
& 136  &   1.60e-01   \\
$2^{-6}$
& 1.18e-01  & 0.46
& 25  &  3.85e-01
&  76  &  4.57e-01

& 3.19e-03 & 0.80
&  48 & 4.46e-01
&  236 & 7.65e-01\\

$2^{-7}$
& 8.66e-02 & 0.44
& 28  & 1.31e+00
& 102  &1.84e+00

&  1.83e-03 & 0.80
&   70   & 1.64e+00
&  440    & 4.09e+00    \\

$2^{-8}$
& 6.43e-02  & 0.43
& 32  &    5.36e+00
& 137 &   1.04e+01

& 1.05e-03 & 0.80
& 96  &   7.58e+00
& 723 &   5.13e+01   \\

$2^{-9}$
& 4.80e-02 & 0.42
& 35  &   2.11e+01
& 182 &  5.49e+01

& 6.00e-04 &  0.80
& 140   &  3.46e+01
&  1409   &  2.19e+02   \\
\hline
\end{tabular*}
\end{threeparttable}
\end{table}
\vspace{-0.6cm}
\begin{table}[H]
\centering
\begin{threeparttable}
\renewcommand{\arraystretch}{1.15} 
   \setlength{\tabcolsep}{9.pt}
\caption{Error $E_{l^{2}}(h)$ and convergence rate for $\textbf{Case 2}$ ($d=2$, $\lambda=0.5$).}
\label{tab18}
\begin{tabular}{cccccccccc}
\toprule
\multirow{2}{*}{$h$} &
\multicolumn{2}{c}{$\alpha=0.8$} &
\multicolumn{2}{c}{$\alpha=1.0$} &
\multicolumn{2}{c}{$\alpha=1.3$}&
\multicolumn{2}{c}{$\alpha=1.6$}\\
\cmidrule(lr){2-3} \cmidrule(lr){4-5} \cmidrule(lr){6-7} \cmidrule(lr){8-9}
 & $E_{l^2}(h)$ & rate & $E_{l^2}(h)$ & rate & $E_{l^2}(h)$ & rate& $E_{l^2}(h)$ &rate \\
\midrule
$2^{-5}$ & 1.05e-01 &  *  &  1.51e-02  & * & 3.00e-02 & * & 7.89e-03 & *\\
$2^{-6}$ & 5.81e-02 & 0.85  &  8.16e-03  & 0.89 & 1.58e-02 & 0.93 & 4.07e-03 &  0.95\\
$2^{-7}$ & 3.18e-02  & 0.87 & 4.35e-03 & 0.91  & 8.19e-03  & 0.95   & 2.08e-03 & 0.97 \\
$2^{-8}$ & 1.73e-02 & 0.88 & 2.30e-03  & 0.92&  4.21e-03  & 0.96 & 1.05e-03 & 0.98 \\
$2^{-9}$ & 9.35e-03 & 0.89 &  1.21e-03 &  0.93 & 2.15e-03  & 0.97 &  5.32e-04  &  0.99 \\
\bottomrule
\end{tabular}
\end{threeparttable}
\end{table}
\vspace{-0.3cm}
\begin{table}[H]
\centering
\begin{threeparttable}
\renewcommand{\arraystretch}{1.15} 
   \setlength{\tabcolsep}{9.pt}
\caption{Error $E_{l^{\infty}}(h)$ and convergence rate for $\textbf{Case 2}$ ($d=2$, $\lambda=0.2$).}
\label{tab19}
\begin{tabular}{cccccccccc}
\toprule
\multirow{2}{*}{$h$} &
\multicolumn{2}{c}{$\alpha=0.8$} &
\multicolumn{2}{c}{$\alpha=1.0$} &
\multicolumn{2}{c}{$\alpha=1.3$}&
\multicolumn{2}{c}{$\alpha=1.6$}\\
\cmidrule(lr){2-3} \cmidrule(lr){4-5} \cmidrule(lr){6-7} \cmidrule(lr){8-9}
 & $E_{l^{\infty}}(h)$ & rate & $E_{l^{\infty}}(h)$ & rate & $E_{l^{\infty}}(h)$& rate& $E_{l^{\infty}}(h)$ &rate \\
\midrule
$2^{-5}$ &  1.25e-01 &   *  &  1.60e-02 & * &  2.56e-02 &  *&  5.16e-03   & *\\
$2^{-6}$ & 9.23e-02  & 0.44  &  1.12e-02 &  0.52  & 1.62e-02 &  0.66  &2.96e-03&  0.80\\
$2^{-7}$ & 6.87e-02  & 0.42 &   7.81e-03 & 0.51 & 1.03e-02  & 0.66  & 1.70e-03 & 0.80 \\
$2^{-8}$ & 5.15e-02  & 0.42 &  5.49e-03  & 0.51 & 6.55e-03  & 0.65 & 9.76e-04  & 0.80 \\
$2^{-9}$ & 3.87e-02 & 0.41 & 3.86e-03 & 0.51 &  4.17e-03 & 0.65& 5.60e-04  &  0.80 \\
\bottomrule
\end{tabular}
\end{threeparttable}
\end{table}
\section{Conclusion}
In this work, we developed a WIRFD scheme for the TFL. For $d=1,2,3$ and $\alpha\in(0,2)$, we established an $O(h^{4-\alpha})$ truncation error bound in the $l^\infty$-norm under the regularity assumption $u\in C^s(\mathbb{R}^d)$ with $s\geq8$. For the one-dimensional TFL equation with $\alpha\in[1,2)$, we further proved $l^2$- and $l^\infty$-stability and the corresponding $O(h^{4-\alpha})$ convergence of the numerical solution by exploiting strict diagonal dominance and a lower bound for the smallest eigenvalue. Numerical experiments support these theoretical results.

A rigorous theoretical analysis for the higher-dimensional equation, however, remains a substantial challenge due to the inherent complexity of the computational stencil coefficients. Nonetheless, numerical experiments for the $d=2$ TFL equation with $u\in C^s(\mathbb{R}^2)$, $s\geq8$, exhibit an $O(h^{4-\alpha})$ rate in both the $l^2$- and $l^{\infty}$-norm errors, providing supporting evidence for the effectiveness of the scheme in higher dimensions. Moreover, the resulting multilevel Toeplitz stiffness matrix not only facilitates fast matrix-vector multiplications but also admits efficient preconditioning strategies, enhancing overall computational efficiency.

A particularly noteworthy observation concerns the source function $f(x)=1$, for which the measured convergence deteriorates to  $O\left(h^{\min\left\{1,\frac{1+\alpha}{2}\right\}}\right)$ and $O(h^{\frac{\alpha}{2}})$ in the $l^2$- and $l^{\infty}$-norms, respectively. This phenomenon is rooted in the well-established regularity theory for $\lambda=0$, wherein the solution exhibits only H\"{o}lder continuity of $\frac{\alpha}{2}$ order in the vicinity of the boundary due to the interplay of the fractional Laplacian with the Dirichlet boundary condition. Consequently, the convergence rate of the $l^{\infty}$-norm error cannot be expected to surpass $O(h^{\frac{\alpha}{2}})$. The experiments suggest that an analogous regularity limitation also persists when $\lambda\neq0$.

Overall, the proposed WIRFD scheme provides a reliable and efficient numerical framework for the TFL equation. Bridging the gap between these numerical observations and the corresponding theoretical analysis, especially for higher-dimensional problems and low-regularity solutions, remains the subject of our ongoing investigations.
\bibliographystyle{spmpsci}
\bibliography{TWFD_JSC}
\end{document}